\documentclass[11pt]{article}

\usepackage[margin=1in]{geometry}
\usepackage{amsmath,amssymb,amsthm,mathtools}
\usepackage{enumitem}
\usepackage{microtype}
\usepackage[hidelinks]{hyperref}
\usepackage{comment}
\usepackage{mathrsfs}
\newcommand{\Sch}{\mathscr S}

\newtheorem{theorem}{Theorem}[section]
\newtheorem{proposition}[theorem]{Proposition}
\newtheorem{lemma}[theorem]{Lemma}
\newtheorem{corollary}[theorem]{Corollary}

\makeatletter
\NewCommandCopy\@@pmod\pmod
\DeclareRobustCommand{\pmod}{\@ifstar\@pmods\@@pmod}
\def\@pmods#1{\mkern4mu({\operator@font mod}\mkern 6mu#1)}
\makeatother

\newcommand{\R}{\mathbb R}
\newcommand{\Z}{\mathbb Z}
\newcommand{\T}{\mathbb T}

\newcommand{\eps}{\varepsilon}
\newcommand{\cB}{\mathcal B}
\newcommand{\cF}{\mathcal F}
\newcommand{\cM}{\mathfrak M}
\newcommand{\supp}{\operatorname{supp}}
\newcommand{\C}{\mathbb C}

\title{The van der Corput property for sums of two squares}
\author{
Steve Fan\thanks{Steve.Fan@uga.edu}~$^{,1}$ \and
Andrew Lott\thanks{Andrew.Lott@uga.edu}~$^{,1,2}$
}
\date{June 23, 2026}

\begin{document}
\maketitle

\begin{center}
University of Georgia$^{1}$ \\
HUN-REN Alfr\'ed R\'enyi Institute of Mathematics (Erd\H{o}s Center)$^{2}$
\end{center}

\begin{abstract}
Let $S_N=\{1\le d\le N:d=x^2+y^2\text{ for some }x,y\in\mathbb Z\}.$
We prove a power-saving form of the van der Corput property for \(S_N\). As a consequence, we obtain a strong Sárközy-type result: if \(A\subseteq [N]\) has no nonzero difference equal to a sum of two squares, then $|A|\ll_\varepsilon N^{7/8+\varepsilon}$ for every $\eps>0$, improving upon an earlier quasipolynomial bound due to Rice. The shape of this bound is optimal, as a construction of Younis yields a set \(A\subseteq [N]\) with $|A|\gg N^{1/2}$
such that \((A-A)\cap S_N=\emptyset\). 
\end{abstract}

\tableofcontents

\section{Introduction}

Let
\[
  S_N=\{1\le d\le N:d=x^2+y^2\text{ for some }x,y\in\mathbb Z\}.
\]
Our main result is the following quantitative form of the van der Corput property for the
set of sums of two squares.

\begin{theorem}[Power-saving van der Corput property]
\label{thm:power-saving-vdc}
For every \(\eps>0\) there is a constant \(C_\eps>0\) such that for every
positive integer \(N\), there are coefficients \(c_d\ge 0\), supported on
\(S_N\), satisfying
\[
  \sum_{d\in S_N}c_d=1
\]
and
\[
  \Re\sum_{d\in S_N}c_d e(d\theta)
  \ge -C_\eps N^{-1/8+\eps}
  \qquad(\theta\in\mathbb R/\mathbb Z).
\]
\end{theorem}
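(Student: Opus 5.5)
We construct the weights $c_d$ explicitly as a normalized, smoothed and arithmetically "pre-sieved" average of representations $d=x^2+y^2$. This is the standard approach for proving van der Corput properties of polynomial-type sets.

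\textbf{Construction.} Fix a parameter $Q=N^{\delta}$ (to be optimized) and put $q=\operatorname{lcm}(1,\dots,Q)$, or a suitable smooth multiple of it. Consider the weights
\[
c_d\propto \#\{(x,y)\in\Z^2: d=q^2(x^2+y^2),\ 1\le x,y\le M\},\qquad M\asymp N^{1/2}/q .
\]
These are supported on $S_N$, and we normalize them so that $\sum_d c_d=1$. The exponential sum then factors:
\[
\sum_d c_d e(d\theta)=\frac{1}{M^2}\,T(q^2\theta)^2,\qquad T(\alpha)=\sum_{x\le M}e(\alpha x^2).
\]
The square is the crucial point. Because we use two independent squares rather than one, the generating function is $T^2$, and $\Re(T^2)$ can be negative only when $\arg T$ is near $\pm\pi/4$. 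For a single square the real part would be uncontrolled. We therefore bound $\Re T(\alpha)^2/M^2$ from below by a major/minor arc decomposition with $\alpha=q^2\theta$.

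\textbf{Minor arcs.} Suppose $\alpha$ is not within $P/(bM^2)$ of some $a/b$ with $b\le P$. Weyl's inequality (or Vinogradov's lemma) gives $|T(\alpha)|\ll M^{1+\eps}P^{-1/2}$. Hence $|T^2|/M^2\ll M^{\eps}P^{-1}$, a power saving.

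\textbf{Major arcs.} Write $\alpha=a/b+\beta$. The standard approximation gives
\[
T(\alpha)=b^{-1}G(a,b)\int_0^M e(\beta t^2)\,dt+O(b^{1/2+\eps}(1+M^2|\beta|)^{1/2}),
\]
where $G(a,b)$ is the Gauss sum. Two cases arise.

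\emph{Case $b$ small.} Here $q\theta$-type structure enters. Since $\alpha=q^2\theta$, whenever $\theta$ is near $a'/b'$ with $b'\le Q$, the dilation by $q^2$ kills the denominator. Then $\alpha$ is near an integer, $b=1$, $G=1$, and
\[
T^2/M^2\approx\Big(M^{-1}\int_0^M e(\beta t^2)\,dt\Big)^2=\Big(\int_0^1 e(\beta M^2u^2)\,du\Big)^2 .
\]
Its real part is not obviously nonnegative: for large $\beta M^2$ the integral behaves like $(8\beta M^2)^{-1/2}e(1/8)$, so the square has argument about $\pi/2$ and real part near $0$ with decaying magnitude. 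To fix the sign we replace the sharp cutoff by a smooth weight $w(x/M)w(y/M)$ chosen so that $\widehat{w\otimes w}$ composed with the quadratic form has nonnegative real part. Gaussian-type weights $e^{-\pi(x^2+y^2)/M^2}$ work: the two-dimensional integral becomes
\[
\int_{\R^2} e^{-\pi|z|^2(1-2i\beta M^2)}\,dz=(1-2i\beta M^2)^{-1},
\]
whose real part is $1/(1+4\beta^2M^4)>0$. This is exactly the benefit of two squares, so we use these Gaussian weights, truncated at $|x|,|y|\le N^{1/2}/q$ at negligible cost. Poisson summation then shows that the main term is positive and that the error is $O(N^{-A})$ when $b=1$.

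\emph{Case $b$ large.} If $Q<b\le P$ (the denominator survives dilation), the Gauss sum bound $|G|\le\sqrt{2b}$ gives $|T^2|/M^2\ll b^{-1}\le Q^{-1}$ up to the errors.

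\textbf{Optimization and main obstacle.} Since $q=e^{(1+o(1))Q}$ is far too large, the lcm cannot literally be taken. Instead we average over dilations by squares $m^2$ with $m$ in a set of $Q$-smooth moduli, or equivalently weight $d=m^2(x^2+y^2)$ over $m\le Q$. Averaging $T(m^2\theta)^2$ over $m$ turns the rational obstruction $\theta\approx a/b$ into a count of $m$ not divisible by $b$, which is controlled by an additional factor of $b^{-1}$ together with a large-sieve/divisor estimate. Balancing the minor-arc loss $P^{-1}$, the major-arc error $b^{1/2}(1+M^2|\beta|)^{1/2}/M$ with $M=N^{1/2}/Q$, and the averaging loss in $m$ gives the choices $P\approx Q\approx N^{1/8}$ and the exponent $-1/8+\eps$.

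The main obstacle is controlling the major arcs with intermediate denominators uniformly in $m$. The phases of the Gauss sums $G(am^2,b)$ could align to give a negative real part after squaring. We must show that the squared Gauss sums $G(am^2,b)^2=\left(\frac{-1}{b}\right)b$ for odd $b$ (with an analogous formula at $2$) have a fixed sign. That sign can be negative when $b\equiv 3\pmod 4$, so the averaging must supply the $b^{-1}$ decay there. This Gauss-sum sign analysis is where we expect most of the work to lie.
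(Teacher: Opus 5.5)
There is a genuine gap. Your eventual weight, $d=m^2(x^2+y^2)$ averaged over $m$, is essentially the paper's object: the paper uses $v^2(x^2+y^2)$ with $x,y\sim H$, a Gaussian weight $e^{-v^2/H^2}$ in $v$, and $H\approx N^{1/4}$. However, the step that makes it work is exactly the one you leave open, and the mechanism you suggest cannot give a power saving.

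\textbf{The missing local computation.} Once you drop the lcm dilation, your ``Case $b$ small'' no longer applies. For a bounded denominator the per-$m$ local factor is not small:
\begin{itemize}
\item for $b=3$ and $3\nmid m$, one has $(b^{-1}G(am^2,b))^2=-1/3$;
\item for $b=4$ or $b=8$ and $m$ odd, it is purely imaginary ($\pm i/2$, resp.\ $\pm i/4$).
\end{itemize}
No ``$b^{-1}$ decay'' or large-sieve count makes these $O(N^{-1/8+\eps})$. They must be \emph{compensated exactly} by the residues $m$ divisible by a suitable power of $p$. What is missing is the evaluation of the $m$-averaged complete sum
$C_b(a)=b^{-3}\sum_{m,x,y\bmod b}e(am^2(x^2+y^2)/b)$.
\begin{itemize}
\item For $p\equiv 3\pmod 4$ and $s=2r-1$: the classes with $p^r\mid m$ contribute $p^{-r}$, and the rest contribute $-(p^{-r}-p^{-2r})$. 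Hence $C_{p^s}(a)=p^{-2r}>0$.
\item For $2^s$: the classes with $2^{\lceil s/2\rceil}\mid m$ give $\Re C_{2^s}(a)=2^{-\lceil s/2\rceil}$. The remaining classes are purely imaginary with total modulus at most $2^{-\lceil s/2\rceil}-2^{-s}$.
\item Multiplicativity then gives $|\arg C_b(a)|<\pi/4$.
\end{itemize}
The per-$m$ ``fixed sign'' you hope for is false, as you half-observe. Positivity appears only after the complete average over $m\bmod b$.

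\textbf{What else is needed to use it.} First, to extract $C_b(a)$ you must apply Poisson summation in $m$ as well as in $x,y$. So $m$ has to run, with a smooth weight, over a range much longer than every major-arc denominator. The paper takes $v$ at scale $H$ and $q\le H^{1-4\sigma}$; your choice $m\le Q$ with denominators up to $P\approx Q$ does not provide this.

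Second, the archimedean factor must lie in the sector $|\arg|\le\pi/4$, not merely in the right half-plane. Your Gaussian $(x,y)$-integral $(1-2i\beta M^2)^{-1}$ only satisfies $|\arg|<\pi/2$. Multiplied by a complex local factor such as $C_4(a)=\tfrac12\pm\tfrac i4$, its real part can be negative and comparable in size to the main term once $|\beta| M^2\gg1$. The paper avoids this by integrating over the Gaussian variable $v$ first. This produces $W(t)$ as a positive average of $(1-2\pi i t(X_1^2+X_2^2))^{-1/2}$, so $|\arg W|\le\pi/4$ and $\Re\bigl(C_q(a)W\bigr)\ge0$.

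Without these two sector statements, your major arcs with $2\le b\le Q$ are unproved, and the exponent $-1/8$ does not follow.

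\textbf{Minor arcs (secondary).} Applying Weyl separately to each $T(m^2\theta)$ is problematic, because the rational approximation of $m^2\theta$ depends on $m$. The paper instead applies Cauchy--Schwarz in $(x,y)$, groups pairs by $n=v_1^2-v_2^2$ using the divisor bound, and then uses an averaged Weyl estimate over $n$.
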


From this we immediately have a strong combinatorial result. 

\begin{corollary}[Sárközy-type theorem for sums of two squares]
\label{cor:sarkozy-two-squares}
For every \(\eps>0\) there is a constant \(C_\eps>0\) such that the following
holds.  If \(A\subseteq\{1,\dots,N\}\) satisfies
\[
  (A-A)\cap S_N=\varnothing,
\]
then
\[
  |A|\le C_\eps N^{7/8+\eps}.
\]
\end{corollary}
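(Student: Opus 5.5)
We derive the corollary from Theorem~\ref{thm:power-saving-vdc} by the standard Fourier positivity argument applied to the indicator function of $A$. Fix $\eps>0$ and let $c_d\ge0$ be the coefficients supplied by the theorem for this $N$. They are supported on $S_N$, satisfy $\sum_d c_d=1$, and obey $\Re\sum_d c_d e(d\theta)\ge -\delta$ for every $\theta$, where $\delta=C_\eps N^{-1/8+\eps}$. We may assume $A\neq\varnothing$, and we write $\widehat{1_A}(\theta)=\sum_{a\in A}e(a\theta)$.

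The key step is to evaluate
\[
  I=\int_0^1 |\widehat{1_A}(\theta)|^2\,\Re\sum_{d}c_d e(d\theta)\,d\theta
\]
in two ways. First, expand $|\widehat{1_A}(\theta)|^2=\sum_{a,a'\in A}e((a-a')\theta)$ and use orthogonality. This gives
\[
  \int_0^1|\widehat{1_A}(\theta)|^2 e(d\theta)\,d\theta=\#\{(a,a')\in A^2:a'-a=d\}.
\]
For $d\in S_N$ we have $d\ge1$, so $\pm d$ is a nonzero element of $\pm S_N$. The hypothesis $(A-A)\cap S_N=\varnothing$ also excludes negative differences, because if $a-a'=-d$ then $a'-a=d\in S_N$. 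Hence every such count vanishes for both $d$ and $-d$. Taking real parts, $\Re e(d\theta)=\tfrac12(e(d\theta)+e(-d\theta))$, so $I=0$.

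Second, the pointwise lower bound from the theorem, together with Parseval, gives
\[
  I\ge -\delta\int_0^1|\widehat{1_A}|^2=-\delta|A|.
\]
This alone is vacuous, so we cannot use the bare integral; we need a term that isolates a large contribution. The fix is to add the constant coefficient by working with the kernel $K(\theta)=1+\Re\sum_d c_d e(d\theta)$ and applying it to the indicator of a long interval rather than to $A$ alone.

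Here is the standard way to do that. Let $M\le N$ be a parameter and consider
\[
  J=\int_0^1 |\widehat{1_A}(\theta)|^2\,\Re\sum_d c_d e(d\theta)\,d\theta .
\]
The theorem's coefficients are only relevant for differences up to $N$, so instead I would use the van der Corput inequality directly: the dual formulation of the property says that any positive-definite sequence is small on average over $S_N$ unless its value at $0$ is small. Concretely, define $f(\theta)=|\widehat{1_A}(\theta)|^2/|A|$, a nonnegative function whose Fourier coefficients are $r(h)=\#\{a-a'=h\}/|A|$. Then $r(0)=1$ and $r(d)=0$ for $d\in\pm S_N$. Write $\mu$ for the measure $f\,d\theta$. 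The lower bound $\Re\sum c_d e(d\theta)\ge-\delta$ only yields $0\ge-\delta$, so we must exploit the concentration of $f$ near $\theta=0$. There $\Re\sum_d c_de(d\theta)\approx 1$ for $|\theta|\le 1/(100N)$, while $\int_{|\theta|\le1/(100N)} f\gg |A|/N$ because $A\subseteq[N]$ makes $\widehat{1_A}(\theta)\approx|A|$ on that range. Splitting $I$ into the major arc $|\theta|\le 1/(100N)$, where the kernel is at least $1/2$, and its complement, where the kernel is at least $-\delta$, gives
\[
  0=I\ge \tfrac12\cdot c\frac{|A|^2}{N}-\delta|A|.
\]
This yields $|A|\ll \delta N = C_\eps N^{7/8+\eps}$.

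The only step needing care is the major-arc estimate. Since $c_d$ is supported on $d\le N$ and $\sum c_d=1$, we have $\Re\sum c_de(d\theta)\ge\cos(2\pi/100)\ge1/2$ for $|\theta|\le1/(100N)$. Likewise, for $a\in[N]$ each $e(a\theta)$ has real part at least $1/2$, so $|\widehat{1_A}(\theta)|\ge|A|/2$ there. Integrating over an interval of length $1/(50N)$ gives the stated $\gg|A|^2/N$.
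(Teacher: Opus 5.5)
Your final argument is correct. The core idea is the same as the paper's: test $|\widehat{1_A}|^2$ against the kernel $\Re P$, extract a main term of size $|A|^2/N$ near the zero frequency, and bound everything else by $-\delta|A|$ via Parseval. The implementation differs, though.

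\textbf{The paper's route.} It works on $\Z/M\Z$ with $M=2N+1$. There the zero frequency $\xi=0$ is a single point carrying exactly $\frac1M|\widehat{1_A}(0)|^2P(0)=|A|^2/M$, so the main term appears with no further estimates. The only care needed is choosing $M$ large enough that $a'-a\equiv d\pmod M$ forces $a'-a=d$. This gives $|A|\le\eta(2N+1)$.

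\textbf{Your route.} You integrate over $\T$, where orthogonality is exact and aliasing never arises. However, the point $\theta=0$ has measure zero, so you must build the main term by hand from the arc $|\theta|\le 1/(100N)$. There you use that both $A$ and the support of $(c_d)$ lie in $[1,N]$ to get $\Re P\ge 1/2$ and $|\widehat{1_A}|\ge|A|/2$. This costs only an absolute constant, giving $|A|\le 400\delta N$. Both routes give the exponent $7/8+\eps$.

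\textbf{Cleanup.} In a write-up, delete the exploratory middle portion, none of which is used. That includes the kernel $K(\theta)=1+\Re P$, the ``indicator of a long interval,'' the unused parameter $M$, the integral $J$ (which is just $I$ again), and the objects $f$, $r(h)$, $\mu$. The proof is simply: $I=0$ by orthogonality, then split $I$ into the arc $|\theta|\le 1/(100N)$ and its complement.
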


A quasipolynomial bound on $|A|$ had earlier been obtained directly by Rice \cite{RiceBinaryQuadraticForms}. In fact, his bound is applicable to subsets lacking nonzero differences equal to the values of any fixed nondegenerate integral binary quadratic form, and in particular, implies that if
\((A-A)\cap S_N=\emptyset\), then
\[
  |A|\ll N\exp(-c\sqrt{\log N}).
\]
In addition, the recent work of Green--Sawhney on the Furstenberg--Sárközy theorem for
squares \cite{GreenSawhneyFurstenbergSarkozy} gives
\[
  |A|\ll N\exp(-c\sqrt{\log N})
\]
whenever \(A\subseteq [N]\) has no nonzero square differences. Since every square is a sum of two squares, a set with
\((A-A)\cap S_N=\emptyset\) has no nonzero square differences.  Thus their
result also gives the same quasipolynomial bound 
in the present problem. It is impossible to improve the shape of the upper bound in Corollary
\ref{cor:sarkozy-two-squares} beyond a polynomial shape, as a construction of
Younis \cite{YounisLowerBounds} yields a set \(A\subseteq [N]\) with
\[
  |A|\gg N^{1/2}
\]
such that \((A-A)\cap S_N=\emptyset\). 

The fact that Theorem \ref{thm:power-saving-vdc} implies Corollary \ref{cor:sarkozy-two-squares} is short and standard, but we record it for the sake of completeness in the appendix. The idea behind this goes back to Kamae--Mendès France \cite{KamaeMendesFrance} and Ruzsa \cite{RuzsaConnections}. Montgomery \cite{MontgomeryTenLectures} gives a useful account of the method.
Matolcsi--Ruzsa develop related connections between difference sets and
positive exponential sums in
\cite{MatolcsiRuzsa2012GeneralProperties}; see
\cite{MatolcsiRuzsa2021CubicResidues} for an application to cubic residues in
$\Z/q\Z$.
A recent breakthrough of Green gives a striking demonstration of the strength
of this method: by proving a power-saving van der Corput theorem for the
shifted primes, he obtained the first power-saving bound for Sárközy's theorem for
differences of the form \(p-1\) with $p$ prime \cite{GreenShiftedPrimes}.

The main difficulty in this problem  is the definition of the coefficients
\(c_d\) in Theorem \ref{thm:power-saving-vdc}. Designing coefficients supported
on \(S_N\) satisfying a power-saving form of the van der Corput property is
difficult because of the lack of symmetry modulo \(q\). This is particularly
evident in the distribution of sums of two squares modulo \(2^m\), or modulo
\(p^m\) when \(p\equiv 3\pmod 4\). The corresponding complete sums may be
negative or purely imaginary, obstructing a lower bound for
\[
  \Re\sum_{d\in S_N}c_de(d\theta).
\]

The coefficients \(c_d\) come from a polynomial exponential sum in three
variables. In particular, unlike the shifted-prime weights in
\cite{GreenShiftedPrimes}, they do not involve truncated Ramanujan expansions.
The use of an additional variable in this construction is inspired by the work
of Slijep\v{c}evi\'c on squares and of
Nin\v{c}evi\'c--Slijep\v{c}evi\'c on odd polynomials
\cite{Slijepcevic2010Squares,NincevicSlijepcevic2013}. Even after the coefficient structure is identified, substantial difficulty
remains: we make a carefully designed choice of smooth weights and develop a
new major- and minor-arc analysis tailored to the resulting three-variable
exponential sum. In particular, our fixed-sector argument overcomes, in this
setting, the obstruction from complex complete sums: the complete sum and the
corresponding oscillatory integral factor may both be complex, and we control
the real part of their product by tracking their sectors. This mechanism is the
main novelty in our work.

\section{Notation}
\label{sec:notation}

We collect here the notation and conventions used throughout the paper.

\begin{itemize}[leftmargin=*]

\item For a positive integer \(N\), we write
\[
  [N]=\{1,2,\dots,N\}.
\]
If \(A\) is a set of integers, then
\[
  A-A=\{a-a':a,a'\in A\}.
\]
We write \(1_A\) for the indicator function of \(A\).

\item Let $\T=\R/\Z$. For \(t\in\R\), we put $e(t)=e^{2\pi i t}$ and write $\|t\|=\min_{n\in\Z}|t-n|$
for the distance from \(t\) to the nearest integer. If \(\theta\in\T\), then
\(\|\theta\|\) stands for the distance from \(\theta\) to \(0\) on \(\T\).

\item We write \(\supp f\) for the support of a function \(f\), and
\(C_c^\infty(\R^d)\) for the space of smooth compactly supported functions from
\(\R^d\) to \(\C\).  All integrals over \(\R^d\) are with respect to Lebesgue measure.

\item We write \(\Sch(\R^d)\) for the Schwartz class of complex-valued functions.  So \(f\in\Sch(\R^d)\) means that \(f\) is smooth and that for every pair of
multi-indices \(\alpha,\gamma\in\Z_{\ge0}^d\),
\[
  \sup_{x\in\R^d}
  \left|x^\alpha \partial^\gamma f(x)\right|
  <\infty.
\]
Equivalently, \(f\) and all of its derivatives decay faster than any power.

\item For \(f\in\Sch(\R^d)\), we have the Fourier transform:
\[
  \widehat f(\xi)
  =
  \int_{\R^d} f(x)e(-x\cdot \xi)\,dx,
  \qquad \xi\in\R^d.
\]

\item We also have the Poisson summation formula: if $f\in \Sch(\R^d)$, then 
\[
  \sum_{n\in\Z^d} f(n)
  =
  \sum_{\ell\in\Z^d} \widehat f(\ell).
\]
We will also use the scaled form
\[
  \sum_{n\in\Z^d} f\!\left(\frac nH\right)
  =
  H^d\sum_{\ell\in\Z^d}\widehat f(H\ell)
\]
for all $H>0$, which follows by applying Poisson summation to the function \(x\mapsto f(x/H)\).

\item We will also use Poisson summation after splitting into residue classes.
If \(f\in\Sch(\R^d)\), \(q\in\mathbb{N}\),
and \(r\in(\Z/q\Z)^d\), then
\begin{equation}\label{eq:f(n)n=r(mod q)}
\sum_{\substack{n\in\Z^d\\ n\equiv r\pmod* q}} f(n)
  =
  q^{-d}\sum_{\ell\in\Z^d}
  e\!\left(\frac{r\cdot \ell}{q}\right)
  \widehat f\!\left(\frac{\ell}{q}\right),  
\end{equation}
which follows by applying Poisson summation to the function $x \mapsto f(qx+r)$.
Moreover, if \(F\colon (\Z/q\Z)^d\to \C\) is any function, then multiplying both sides of \eqref{eq:f(n)n=r(mod q)} by $F(r)$ and summing over all residue classes $r\pmod* q$ give
\begin{equation}\label{eq:F(n mod q)}
\sum_{n\in\Z^d}F(n\pmod* q)f(n)
  =
  q^{-d}\sum_{\ell\in\Z^d}
  \left(
    \sum_{r\pmod* q}F(r)e\!\left(\frac{r\cdot \ell}{q}\right)
  \right)
  \widehat f\!\left(\frac{\ell}{q}\right).   
\end{equation}
Conversely, we recover \eqref{eq:f(n)n=r(mod q)} by taking $F(n)=1_{n\equiv r\pmod*{q}}$ in \eqref{eq:F(n mod q)}.

\item On \(\Z/M\Z\), we use the discrete Fourier transform
\[
  \widehat f(\xi)
  =
  \sum_{x\pmod* M} f(x)e\!\left(\frac{x\xi}{M}\right),
  \qquad \xi\in\Z/M\Z,
\]
where $f:\Z/M\Z\to\C$ is any function. With this convention,
\[
  f(x)
  =
  \frac1M\sum_{\xi\pmod* M}\widehat f(\xi)
  e\!\left(-\frac{x\xi}{M}\right),
\]
and Parseval's identity is
\[
  \frac1M\sum_{\xi\pmod* M}|\widehat f(\xi)|^2
  =
  \sum_{x\pmod* M}|f(x)|^2.
\]

\item We use Vinogradov and Landau notation in the usual way.  Thus $X\ll Y$ and $X=O(Y)$
both mean that \(|X|\le CY\) for some constant \(C>0\).  Subscripts indicate
the allowed dependence of the implicit constant. 

\item We write $X\asymp Y$
to mean \(X\ll Y\) and \(Y\ll X\).  

\item We write \(x\sim H\) to mean \(H\le x<2H\).

\item For every $t\in\R$, we denote by $\lfloor t\rfloor$ the greatest integer not exceeding $t$, and by $\lceil t\rceil$ the least integer no less than $t$.

\item We write \(\Re z\), \(\Im z\), and \(\arg z\) for the real part,
imaginary part, and argument of a complex number \(z\).  Whenever a principal
branch is mentioned, \(\arg z\) is taken in \((-\pi,\pi]\).  In particular,
\(z^{-1/2}\) denotes the inverse square root with respect to the principal
branch.

\end{itemize}

\section{Defining the weight}
\label{sec:defining-weight}

Throughout, \(H>0\) is an auxiliary scale which will be chosen later in terms of
\(N\).
Fix a nonnegative function \(U\in C_c^\infty(\R)\) with $\supp U=[1,2].$
Define
\begin{equation}\label{eq:BH-def}
 \mathcal B_H(\theta)
  =
  \frac12
  \sum_{v,x,y\in\Z}
  U\!\left(\frac{x}{H}\right)
  U\!\left(\frac{y}{H}\right)
  \exp\!\left(-\frac{v^2}{H^2}\right)
  e\!\left(\theta v^2(x^2+y^2)\right).
\end{equation}
Since $e^{-t}<1/t$ for all $t\ge1$, we have
\begin{equation}\label{eq:B_{H}(theta)}
\sum_{v\ge0}\exp\!\left(-\frac{v^2}{H^2}\right)\ll H+H^2\sum_{v>H}\frac{1}{v^2}\ll H,
\end{equation}
so that $|\mathcal B_H(\theta)|\le \mathcal B_H(0)\ll_U H^3$ for all $\theta\in\T$. Set
\[
  \cF_H(\theta)=\frac{\cB_H(\theta)}{\cB_H(0)}.
\]
Then \(\cF_H\) is the Fourier transform of a positive probability measure supported on $\{0\}$ together with positive integers \(d\) representable as sums of two squares:
\[
  d=v^2(x^2+y^2)=(vx)^2+(vy)^2.
\]
The coefficients $c_d$ from Theorem \ref{thm:power-saving-vdc} will be obtained by collecting the terms in $\cF_H(\theta)$, discarding the negligible contribution from large $d$ and $d=0$, and then renormalizing. In this way the goal becomes proving a lower bound for $\Re \cF_H(\theta)$, or equivalently a lower bound for $\Re \cB_H(\theta)$. 

The form of \(\cB_H\) also matches a pattern suggested by linear
programming experiments. In particular, the data
suggested that favorable coefficients \(c_d\) should be relatively large when
\(d\) has many square divisors. If one ignores the smooth factors
\(U(x/H)U(y/H)\) and \(e^{-v^2/H^2}\), then the coefficient of \(e(d\theta)\)
in \(\cB_H(\theta)\) is proportional to the number of representations of the
form
\[
  d=v^2(x^2+y^2).
\]
Thus square divisors of \(d\) naturally create additional contributions to the
coefficient of \(e(d\theta)\),
matching the feature suggested
by the linear programming experiments. 

To see why the extra variable \(v\) is useful in the proof, we preview the
major-arc estimate. If \(\theta\) is on a major arc, say
\(\theta=a/q+\beta\), then the main term has the form
\[
  C_q(a)W(\beta H^4)\cB_H(0).
\]
Here
\[
  C_q(a)=q^{-3}\sum_{v,x,y\pmod* q}
  e\!\left(\frac{av^2(x^2+y^2)}{q}\right).
\]
A key part of the argument is to prove that this complete sum lies in a fixed
sector:
\[
  |\arg C_q(a)|<\pi/4.
\]
Averaging over the third variable \(v\) is critical here. The more direct
complete sum
\[
  q^{-2}\sum_{x,y\pmod* q}
  e\!\left(\frac{a(x^2+y^2)}{q}\right)
\]
does not lie in the desired sector for many values of \(q\), reflecting the
local asymmetry of sums of two squares mentioned in the introduction. For
\(q=p^m\), the residues with \(p^{\lceil m/2\rceil}\mid v\) satisfy
\(q\mid v^2\), so the phase in \(x,y\) is identically zero and these residues
give a positive real contribution.  We make this precise using the
standard evaluations of quadratic Gauss sums. Combining this calculation with the multiplicativity of the complete sum gives
the desired sector bound for general \(q\). Crucially, the oscillatory integral factor $W(t)$
also satisfies
\[
  |\arg W(t)|\leq \pi/4.
\]
Consequently, \(C_q(a)W(t)\) lies in the closed right half-plane. This makes the
major-arc contribution to \(\Re\cB_H(\theta)\) nonnegative up to the error from
Poisson summation.

This fixed-sector argument is the main distinction between our construction and
the arguments of
\cite{Slijepcevic2010Squares,NincevicSlijepcevic2013}.
Slijep\v{c}evi\'c's construction for squares averages over a carefully chosen
finite collection of dilations so that their real major-arc contributions
combine to give a uniform lower bound.
Nin\v{c}evi\'c--Slijep\v{c}evi\'c note that their odd-polynomial argument does
not directly extend when the complete sum has an uncontrolled imaginary part.

The smooth cutoff
\(U(x/H)U(y/H)\) localizes \(x\) and \(y\) so that \(x,y\sim H\).  This keeps
\(x^2+y^2\asymp H^2\), gives the normalization \(\cB_H(0)\asymp H^3\) in
Lemma \ref{lem:normalization}, and supplies the smoothness needed for Poisson
summation in the major arc estimate. We take \(x,y\sim H\) instead of a cutoff which includes
smaller values of \(x\) and \(y\) because the range
\(x,y\sim H\) already controls the normalization and the minor-arc estimate. In particular it is clean and including the lower ranges would not improve the exponent. 

The factor \(e^{-v^2/H^2}\) localizes $v$ at scale $H$, so that in effect the main contribution to $\cB_H(\theta)$ comes from $v,x,y$ with  \(v^2(x^2+y^2)\ll H^4\), which
is why \(H\) is later chosen to be roughly \(N^{1/4}\).  This Gaussian factor is
also what produces the explicit factor \(W(\beta H^4)\) in the
major arc estimate after summing over \(v\).

We first record a more precise estimate for the
size of \(\cB_H(0)\). We defer the proof to an appendix in order to maintain the flow of the argument. 

\begin{lemma}[Normalization]
\label{lem:normalization}
Let \(U\in C_c^\infty(\R)\) with $\supp U=[1,2]$ and 
\begin{equation}\label{CUdef}
  C_U:=\frac{\sqrt\pi}{2}\left(\int_\R U(t)\,dt\right)^2>0.
\end{equation}
Then, for every \(A>0\),
\begin{equation}
  \cB_H(0)=C_UH^3+O_{U,A}(H^{-A}).
  \label{eq:ZH-asymp}
\end{equation}
In particular \(\cB_H(0)\asymp_U H^3\).
\end{lemma}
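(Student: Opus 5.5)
At $\theta=0$ the exponential factor equals $1$, so the triple sum splits as a product:
\[
  \cB_H(0)=\frac12\Bigl(\sum_{x\in\Z}U\!\left(\tfrac{x}{H}\right)\Bigr)^2\Bigl(\sum_{v\in\Z}\exp\!\left(-\tfrac{v^2}{H^2}\right)\Bigr).
\]
I will evaluate each one-dimensional factor by the scaled Poisson summation formula from Section~\ref{sec:notation}. Both $U$ and the Gaussian $g(t)=e^{-t^2}$ lie in $\Sch(\R)$; $U$ does because it is smooth with compact support.

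\emph{The Gaussian factor.} Poisson summation gives
\[
  \sum_{v\in\Z}g\!\left(\tfrac vH\right)=H\sum_{\ell\in\Z}\widehat g(H\ell),\qquad \widehat g(\xi)=\sqrt\pi\,e^{-\pi^2\xi^2}.
\]
The $\ell=0$ term is $\sqrt\pi\,H$. The remaining terms contribute
\[
  O\!\left(H\sum_{\ell\neq0}e^{-\pi^2H^2\ell^2}\right)=O(He^{-\pi^2H^2}),
\]
which is $O_A(H^{-A})$ for $H\ge1$.

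\emph{The $U$ factor.} Likewise
\[
  \sum_{x\in\Z}U\!\left(\tfrac xH\right)=H\widehat U(0)+H\sum_{\ell\ne0}\widehat U(H\ell),\qquad \widehat U(0)=\int_\R U.
\]
Since $U\in\Sch(\R)$, integrating by parts $B$ times gives $|\widehat U(\xi)|\ll_{U,B}|\xi|^{-B}$. Hence the tail is $O_{U,B}(H^{1-B})$, and choosing $B$ large makes this $O_{U,A}(H^{-A})$.

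\emph{Combining.} Multiplying out gives
\[
  \cB_H(0)=\tfrac12\bigl(H\widehat U(0)\bigr)^2\sqrt\pi\,H+\text{(error)}=C_UH^3+\text{(error)}.
\]
Every cross term is a product of a main term of size $O(H)$ or $O(H^2)$ with an error of size $O_{U,A'}(H^{-A'})$. Taking $A'=A+3$ makes every cross term $O_{U,A}(H^{-A})$. This proves \eqref{eq:ZH-asymp} for $H\ge1$.

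\emph{Small $H$.} The asymptotic is only meaningful for $H\ge1$. For $0<H<1$, every integer $x$ satisfies $x/H\notin(1,2)$ unless $H>1/2$, and in any case the sums are bounded. The difference $\cB_H(0)-C_UH^3$ is then $O_U(1)$, which is $\le O_{U,A}(H^{-A})$ since $H^{-A}\ge1$ there.

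\emph{Positivity and $\cB_H(0)\asymp_U H^3$.} We have $C_U>0$ because $U\ge0$ is nonzero, so $\int_\R U>0$. For $H$ larger than a constant depending on $U$, this gives $\cB_H(0)\asymp_U H^3$. For bounded $H$ the two-sided bound needs more care, since $\cB_H(0)$ may vanish for small $H$. The natural reading is that the statement concerns $H\ge H_0(U)$, which is harmless since $H\asymp N^{1/4}$ will be large; I will state this restriction explicitly.

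The only mildly delicate step is the decay of $\widehat U$, and this is standard for Schwartz functions.
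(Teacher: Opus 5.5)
Your proof is correct and is essentially the paper's argument: the paper applies Poisson summation on $\Z^3$ to $\Phi(T,X_1,X_2)=U(X_1)U(X_2)e^{-T^2}$, takes the zero frequency as $C_UH^3$ and bounds the nonzero frequencies by Schwartz decay, which matches your factor-by-factor computation because $\widehat\Phi$ is the product of the one-dimensional transforms. Your remark that $\cB_H(0)$ vanishes for small $H$ (so $\cB_H(0)\asymp_U H^3$ requires $H\ge H_0(U)$) is a correct refinement that the paper leaves implicit.
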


The main estimate for the weight is the following.  It will be proved in the
remaining sections.

\begin{theorem}
\label{thm:H-fourier-bound}
Let \(U\in C_c^\infty(\R)\) be as in Lemma \ref{lem:normalization}. For every \(\eps>0\), there are constants \(C_{U,\eps}>0\) and
\(H_0(U,\eps)\) such that, for all \(H\ge H_0(U,\eps)\),
\begin{equation}
  \Re \cF_H(\theta)\ge -C_{U,\eps}H^{-1/2+\eps}
  \qquad(\theta\in\R/\Z).
  \label{eq:H-fourier-bound}
\end{equation}
\end{theorem}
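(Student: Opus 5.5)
The plan is a circle-method analysis of $\cB_H(\theta)$. By Lemma \ref{lem:normalization} it suffices to show $\Re\cB_H(\theta)\ge -C H^{5/2+\eps}$. Fix $\theta$ and, by Dirichlet's theorem with parameter $Q=H^{2}$ (to be optimized), write $\theta=a/q+\beta$ with $(a,q)=1$, $q\le Q$ and $|\beta|\le 1/(qQ)$. We split according to whether $q$ and $|\beta|H^4$ are small (major arcs) or not (minor arcs). The target is a bound of size $H^{3}\cdot H^{-1/2+\eps}$ on everything that is not a nonnegative main term.

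\emph{Minor arcs.} For each fixed $v$ with $1\le |v|\ll H^{1+\eps}$ (larger $v$ are negligible by the Gaussian), the inner sum factors as $S(\theta v^2)^2$ with
\[
S(\alpha)=\sum_x U(x/H)e(\alpha x^2).
\]
Weyl differencing or the standard quadratic Gauss sum bound gives
\[
|S(\alpha)|^2\ll H^{1+\eps}\Bigl(1+\frac{H^2}{q'}+q'\Bigr)\Bigl(\frac{1}{H^{2}}\Bigr)\cdot H
\]
(roughly $|S|^2\ll H^{2+\eps}/q'+H^\eps q'$) when $\alpha=b/q'+O(1/q'^2)$. Summing over $v$ therefore requires controlling how often $\theta v^2$ has a small denominator. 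Writing $\theta=a/q+\beta$, the number $\theta v^2$ has denominator $\approx q/(q,v^2)$. On average over $v\le H$ the factor $(q,v^2)$ is $\ll q^{1/2+\eps}$ in the worst case, and each term is bounded by $H^{2+\eps}(q,v^2)/q + H^{2+\eps}|\beta| H^2$-type quantities. Summing,
\[
\sum_v |S(\theta v^2)|^2\ll H^{3+\eps}\Bigl(q^{-1/2}+\frac{q}{H^2}+(1+|\beta|H^4)^{-1/2}\Bigr).
\]
This gives the $H^{5/2+\eps}$ saving once $q\ge H$ or $|\beta|H^4\ge H$. The rational approximation with $q\le H$, $|\beta|\le H^{-3}$ then defines the major arcs.

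\emph{Major arcs.} We split $v,x,y$ into residue classes mod $q$ and apply \eqref{eq:F(n mod q)} in three variables. The zero frequency yields
\[
C_q(a)\,W(\beta H^4)\,\cB_H(0)\,(1+O(H^{-A})),
\]
where $W$ comes from the integral $\int\!\!\int\!\!\int U U e^{-v^2}e(tv^2(x^2+y^2))$. After normalization, integrating out $v$ gives $W(t)=c\int\!\!\int U(x)U(y)(1-2\pi i t(x^2+y^2))^{-1/2}dxdy$, which has argument in $[-\pi/4,\pi/4]$ since each integrand does. The nonzero frequencies are bounded by $|\widehat f(\ell/q)|$, which decays rapidly unless $|\ell|/q\lesssim |\beta| H^{3}$, i.e. unless the phase is stationary. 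Handling these with the trivial bound on the complete sums, $q^{-3}|\sum|\ll q^{-1/2}$, contributes $\ll H^{3}q^{1/2}(q|\beta|H^3/H)^{O(1)}/H$-type terms. With $q\le H$ and $|\beta|\le H^{-3}$ these are $\ll H^{5/2+\eps}$. Some care is needed because the $v$-variable is Gaussian and not compactly supported, but it is Schwartz, so Poisson summation applies.

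\emph{Sector bound (main obstacle).} It remains to prove $|\arg C_q(a)|<\pi/4$ strictly for all $q$, or at least $\Re(C_q(a)W)\ge0$. By CRT, $C_q$ is multiplicative in $q$, but arguments add, so one needs stronger control: for $p$ odd the prime-power factor should be real positive or have small argument. For $q=p^m$ we will evaluate explicitly: split $v$ by $p^j\| v$ and use $\sum_{x,y}e(a u^2 p^{2j}(x^2+y^2)/p^m)=p^{2\cdot\min(2j,m)}\cdot G^2$. The product of two Gauss sums gives $\left(\frac{-1}{p}\right)p^{m-2j}$, which is real. Hence for odd $p$ the factor $C_{p^m}(a)$ is real, and it is positive since the $v\equiv0$ terms dominate; this needs to be verified, as for $p\equiv3\pmod 4$ the signs alternate. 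For $p=2$ the Gauss sums carry $e(\pm1/8)$ factors, so $C_{2^m}(a)$ squared gives $\pm i$ phases, and a direct computation must show that the positive term from $2^{\lceil m/2\rceil}\mid v$ keeps the argument below $\pi/4$. Combined with $|\arg W|\le\pi/4$, this makes the main term have nonnegative real part. This step, and in particular verifying positivity at $p\equiv 3\pmod 4$ and at $p=2$, is where I expect the real work.
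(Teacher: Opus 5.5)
Your treatment of the complete sum matches the paper (Propositions \ref{prop:odd-prime-powers} and \ref{prop:dyadic-sector}, Lemma \ref{lem:multiplicativity}). The step you flag as the main obstacle is short. For $p\equiv3\pmod 4$ and $s$ odd the sum telescopes to $C_{p^s}(a)=p^{-2\lceil s/2\rceil}>0$. For $p=2$ the residues with $2^{\lceil s/2\rceil}\nmid v$ contribute purely imaginary terms of total modulus at most $2^{-\lceil s/2\rceil}-2^{-s}$, against the real contribution $2^{-\lceil s/2\rceil}$ of the rest.

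The genuine gaps are in the analysis of $\cB_H$, first on the minor arcs. You bound $\cB_H$ by $\sum_v|S(\theta v^2)|^2$ and reuse $\theta v^2\approx av^2/q$ (denominator $r_v=q/(q,v^2)$) for every $v$. This loses too much once $|\beta|v^2$ is large compared with $r_v^{-2}$. For example, $q=1$, $|\beta|\approx H^{-2}$ lies in your minor arcs, and then $\theta v^2\equiv\beta v^2$ sweeps across $\T$. You would need a fresh approximation of each $\theta v^2$ and a count of how often these have small denominators, which you do not supply.

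Quantitatively, use the pointwise bound $|S(b/r+\gamma)|^2\ll H^{\eps}\bigl(H^2r^{-1}(1+H^2|\gamma|)^{-1}+r(1+H^2|\gamma|)\bigr)$ with $\gamma=\beta v^2$. Its second term sums over $v\le H$ to about $Hq+qH^5|\beta|$, i.e.\ up to $HQ+H^5/Q$ for Dirichlet parameter $Q$. No $Q$ makes both pieces $\le H^{5/2}$. Your displayed bound does not rescue this:
\begin{itemize}
\item for $Q=H^2$, its term $q/H^2$ is $\asymp1$ when $q\approx H^2$;
\item its term $(1+|\beta|H^4)^{-1/2}$ is added rather than multiplied, so it is $\asymp1$ for tiny $\beta$ even when $q>H$.
\end{itemize}
The missing idea is to let a single approximation of $\theta$ serve all $v$. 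The paper applies Cauchy--Schwarz in $(x,y)$, expands over $(v_1,v_2)$, and groups by $n=v_1^2-v_2^2$ (divisor-bounded multiplicity). It then expands $|S(n\theta)|^2$ once more, reducing to $\sum_{1\le|m|\ll H^2}\min\{M,\|m\theta\|^{-1}\}$ with $M\asymp H^2\log H$ (Lemma \ref{lem:averaged-sums}). This needs only $|\theta-a/q|\le q^{-2}$ and gives $|\cF_H(\theta)|^2\ll H^{\eps}(H^{-1}+q^{-1}+qH^{-4})$.

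Your major arcs $q\le H$, $|\beta|\le H^{-3}$ are also too wide for an absolute-value treatment of the nonzero Poisson frequencies. Integration by parts gives $|\widehat{\Phi_\beta}(\xi)|\ll H^3(1+|\beta|H^4)^M(1+H|\xi|)^{-M}$. This makes the dual terms negligible only when $q(1+|\beta|H^4)\le H^{1-\delta}$, i.e.\ when both the frequency scale $1/H$ of the cutoffs and the gradient $\approx|\beta|H^3$ of the phase $\beta v^2(x^2+y^2)$ fall below the dual spacing $1/q$ by a power of $H$. Consider your corner $q\approx H$, $|\beta|\approx H^{-3}$. There are about $q^3$ frequencies $\ell$ with $\ell/q$ in the range of that gradient. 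By stationary phase, each has $|\widehat{\Phi_\beta}(\ell/q)|$ of size about $H^{3/2}$. So even with square-root cancellation in the complete sums, the sum of absolute values is of order $(qH)^{3/2}\approx H^3$, and your claimed $H^{5/2+\eps}$ for these terms is not justified.

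The paper instead takes $Q=H^{1-4\sigma}$ and major arcs $q\le Q$, $|\beta|\le Q/(qH^4)$. There Lemma \ref{lem:lattice-summation} gives relative error $H^{-4J\sigma}$. It uses Dirichlet with parameter $H^4/Q$, so every $\theta\notin\cM$ has an approximation with $Q<q\le H^4/Q$ (Lemma \ref{lem:minor-dirichlet}). The minor-arc bound then gives $|\cF_H|\ll H^{-1/2+2\sigma+\eps}$, and taking $\sigma<\eps/4$ finishes the proof.
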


We now show that Theorem \ref{thm:H-fourier-bound} implies Theorem
\ref{thm:power-saving-vdc}.

\begin{proof}[Proof of Theorem \ref{thm:power-saving-vdc} assuming Theorem
\ref{thm:H-fourier-bound}]
It is enough to prove the claim for sufficiently large \(N\), since for bounded
\(N\) we may take \(c_1=1\) and \(c_d=0\) for all $d>1$ and enlarge the constant \(C_\eps\).  Choose
\begin{equation}
  H=\left(\frac{N}{(\log N)^{100}}\right)^{1/4}.
  \label{eq:H-from-N}
\end{equation}
For \(d\ge1\), let
\[
  w_H(d)=
  \sum_{\substack{v\ge1,\ x,y\in\Z\\ v^2(x^2+y^2)=d}}
  U\!\left(\frac{x}{H}\right)U\!\left(\frac{y}{H}\right)e^{-v^2/H^2}.
\]
where \(U\in C_c^\infty(\R)\) is as in Lemma \ref{lem:normalization}. Then \(w_H(d)\ge0\), and \(w_H(d)=0\) unless \(d\) is a sum of two squares.

The only difference between the positive \(v\)-weights \(w_H(d)\) and the
symmetric weight \(\cB_H\) is the contribution coming from \(v=0\).  Put
\[
  z_H=\frac12\sum_{x,y\in\Z}U\!\left(\frac{x}{H}\right)U\!\left(\frac{y}{H}\right).
\]
Since \(U\) is supported in \([1,2]\), we have $z_H\ll_U H^2.$
Moreover, because the two signs \(v\) and \(-v\) cancel the factor \(1/2\), we
have
\[
  \cB_H(\theta)
  =
  z_H+\sum_{d\ge1}w_H(d)e(d\theta),
  \qquad
  \cB_H(0)
  =
  z_H+\sum_{d\ge1}w_H(d).
\]
By Lemma \ref{lem:normalization},
\[
  \frac{z_H}{\cB_H(0)}
  \ll_U
  \frac{H^2}{H^3}
  =
  H^{-1}.
\]
Since \(\cB_H\) is the Fourier transform of a positive measure, we also have $|\cB_H(\theta)|\le \cB_H(0)$. Therefore, we have
\begin{equation}\label{eq:comp1}
\left|
  \frac{\sum_{d\ge1}w_H(d)e(d\theta)}
       {\sum_{d\ge1}w_H(d)}
  -
  \cF_H(\theta)
  \right|                                                   
  \quad =
  \left|
  \frac{\cB_H(\theta)-z_H}{\cB_H(0)-z_H}
  -
  \frac{\cB_H(\theta)}{\cB_H(0)}
  \right|                                                   
  \quad =
  \frac{z_H|\cB_H(\theta)-\cB_H(0)|}
       {\cB_H(0)(\cB_H(0)-z_H)}
  \ll_U H^{-1}
\end{equation}
uniformly for all \(\theta\in\T\). Thus removing the contribution from \(v=0\) changes the normalized Fourier
transform by \(O_U(H^{-1})\).

We now truncate to \(d\le N\).  If \(d>N\), then on the support of
\(U(x/H)U(y/H)\) we have \(x^2+y^2\le 8H^2\), and hence
\[
  \frac{v^2}{H^2}
  =
  \frac{d}{H^2(x^2+y^2)}
  \ge
  \frac{d}{8H^4}
  >
  \frac{N}{8H^4}.
\]
Therefore
\[
  e^{-v^2/H^2}\le e^{-N/(16H^4)}e^{-v^2/(2H^2)}.
\]
Hence, for every fixed \(A>0\),
\[\sum_{d>N}w_H(d)\le
  e^{-N/(16H^4)}
  \sum_{\substack{x,y\in\Z\\v\ge1}}
  U\!\left(\frac{x}{H}\right)U\!\left(\frac{y}{H}\right)
  e^{-v^2/(2H^2)}\ll_{A,U} N^{-A}\sum_{v\ge0}e^{-v^2/(2H^2)}\ll N^{-A}H^3,\]
because \(N/H^4=(\log N)^{100}\), and the last sum is \(O(H^3)\) by \eqref{eq:B_{H}(theta)} with $\sqrt{2}H$ in place of $H$.
Since
\[
  \sum_{d\ge1}w_H(d)=\cB_H(0)-z_H\asymp_U H^3,
\]
we have
\[
  \frac{\sum_{d>N}w_H(d)}{\sum_{d\ge1}w_H(d)}
  \ll_{A,U} N^{-A}.
\]
It follows that, uniformly in \(\theta\),
\begin{equation}\label{eq:comp2}
  \left|
  \frac{\sum_{1\le d\le N}w_H(d)e(d\theta)}
       {\sum_{1\le d\le N}w_H(d)}
  -
  \frac{\sum_{d\ge1}w_H(d)e(d\theta)}
       {\sum_{d\ge1}w_H(d)}
  \right|                                                   
  \le
  \frac{2\sum_{d>N}w_H(d)}{\sum_{d\ge1}w_H(d)}
  \ll_{A,U}N^{-A}.
\end{equation}
Thus removing the terms with \(d>N\) changes the normalized Fourier transform by
\(O_{A,U}(N^{-A})\).

Put
\[
  Z_{N,H}=\sum_{1\le d\le N}w_H(d),
\]
and define
\[
  c_d=
  \begin{cases}
    Z_{N,H}^{-1}w_H(d), & \text{if $1\le d\le N$},\\
    0, & \text{if $d>N$}.
  \end{cases}
\]
Then \(c_d\ge0\) are supported on \(S_N\) and satisfy $\sum_{d\in S_N}c_d=1.$
Furthermore, the two comparison estimates \eqref{eq:comp1} and \eqref{eq:comp2} give, uniformly in \(\theta\),
\[
  \sum_{d\in S_N}c_d e(d\theta)
  =
  \cF_H(\theta)+O_U(H^{-1})+O_{A,U}(N^{-A}).
\]
By Theorem \ref{thm:H-fourier-bound}, applied with \(\eps/2\) in place of
\(\eps\),
\[
  \Re\sum_{d\in S_N}c_d e(d\theta)
  \ge
  -C_{U,\eps}H^{-1/2+\eps/2}
  -O_U(H^{-1})
  -O_{A,U}(N^{-A}).
\]
Since
\[
  H^{-1/2+\eps/2}
  =
  N^{-1/8+\eps/8}
  (\log N)^{100(1/8-\eps/8)}
  \ll_\eps N^{-1/8+\eps},
\]
and also
\[
  H^{-1}
  =
  N^{-1/4}(\log N)^{25}
  \ll_\eps N^{-1/8+\eps},
\]
while \(A\) may be chosen arbitrarily large, we obtain
\[
  \Re\sum_{d\in S_N}c_d e(d\theta)
  \ge
  -C_\eps N^{-1/8+\eps}
  \qquad(\theta\in\R/\Z)
\]
for some suitable constant $C_{\eps}>C_{U,\eps}$. This proves Theorem \ref{thm:power-saving-vdc}.
\end{proof}

\section{The complete sum}

For $(a,q)=1$ define
\begin{equation}
  C_q(a)=q^{-3}\sum_{v,x,y\pmod* q}
  e\!\left(\frac{av^2(x^2+y^2)}{q}\right).
  \label{eq:Cq-def}
\end{equation}
This turns out to be the complete sum which appears in the major arc approximation for $\cB_H(\theta)$. The goal of this section is to show that \(C_q(a)\) always lies in a fixed sector contained in the right half-plane. 

To begin we introduce the notation for the standard quadratic Gauss sum: 
\[G_{q}(a)=\sum_{x\pmod* q}e(ax^2/q).\]
The following computations use only the standard evaluation of quadratic Gauss sums in the case that $q$ is a prime power. We record these standard facts here for reference. For background on these see Chapter $1$ of Berndt--Evans--Williams \cite{BEW}, especially Theorems 1.5.1 and 1.5.2, and Proposition 1.5.3. Given a prime $p$, let $t$ be the largest integer for which $p^t\mid a$ and write $a=p^ta_0$, where $p\nmid a_0$.
\begin{itemize}

\item We have $G_{p^s}(a)=p^tG_{p^{s-t}}(a_0)$ if $s>t$,
while \(G_{p^s}(a)=p^s\) if \(0\le s\le t\).

\item For \(p\) odd and \(p\nmid a_0\),
\[
  G_{p^m}(a_0)=
  \begin{cases}
    p^{m/2}, & \text{if $m$ even},\\[4pt]
    \left(\dfrac{a_0}{p}\right)\varepsilon_p\,p^{m/2},
      & \text{if $m$ odd},
  \end{cases}
\]
where \(\left(\dfrac{a_0}{p}\right)\) is the Legendre symbol modulo $p$ and
\[
  \varepsilon_p=
  \begin{cases}
    1, & \text{if $p\equiv1\pmod4$},\\
    i, & \text{if $p\equiv3\pmod4$}.
  \end{cases}
\]

\item For \(p=2\) and \(a_0\) odd,
\[
  G_{2^m}(a_0)=
  \begin{cases}
    0, & \text{if $m=1$},\\[4pt]
    2^{m/2}\bigl(1+i^{a_0}\bigr), & \text{if $m\ge2$ is even},\\[4pt]
    2^{(m+1)/2}e(a_0/8), & \text{if $m\ge3$ is odd}.
  \end{cases}
\]
\end{itemize}

\begin{proposition}[Odd prime powers]
\label{prop:odd-prime-powers}
Let $a\in\Z$ and $s\in\mathbb{N}$, and let $p$ be an odd prime with $(a,p)=1$. Put $r=\lceil s/2\rceil$ and
\[
  \sigma_{p,s}=\begin{cases}
  1, & \text{if $s$ is even},\\
  \left(\dfrac{-1}{p}\right), & \text{if $s$ is odd}.
  \end{cases}
\]
Then
\begin{equation}
  C_{p^s}(a)=p^{-r}+\sigma_{p,s}(1-p^{-1})\sum_{j=0}^{r-1}p^{-s+j}.
  \label{eq:odd-prime-formula}
\end{equation}
In particular $C_{p^s}(a)$ is real and positive.
\end{proposition}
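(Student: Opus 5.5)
The plan is to evaluate \(C_{p^s}(a)\) exactly, by first doing the inner \((x,y)\)-sum as a square of a Gauss sum and then sorting \(v\) by its \(p\)-adic valuation. For fixed \(v\) the inner double sum factors, so
\[
  C_{p^s}(a)=p^{-3s}\sum_{v\pmod* {p^s}} G_{p^s}(av^2)^2 .
\]
I will split the residues \(v\pmod* {p^s}\) according to \(j=v_p(v)\), with \(v=0\) counted as \(j\ge s\).

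First, the degenerate range \(j\ge r=\lceil s/2\rceil\). Here \(p^s\mid v^2\), so \(G_{p^s}(av^2)=p^s\) by the first bullet on Gauss sums. There are \(p^{s-r}\) such residues (the multiples of \(p^r\)). Their total contribution is
\[
  p^{-3s}\cdot p^{s-r}\cdot p^{2s}=p^{-r},
\]
which is the first term in \eqref{eq:odd-prime-formula}.

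Second, the range \(0\le j\le r-1\), so that \(2j<s\). Write \(v=p^jv_0\) with \(p\nmid v_0\). The first bullet (with \(t=2j<s\)) gives \(G_{p^s}(av^2)=p^{2j}G_{p^{s-2j}}(av_0^2)\). Since \(m=s-2j\ge1\) has the parity of \(s\), the odd-prime evaluation gives
\[
  G_{p^m}(av_0^2)^2=\begin{cases} p^m, & \text{if $m$ is even},\\ \left(\tfrac{av_0^2}{p}\right)^2\varepsilon_p^2\,p^m=\left(\tfrac{-1}{p}\right)p^m, & \text{if $m$ is odd}.\end{cases}
\]
In the odd case I use \(\varepsilon_p^2=(\tfrac{-1}{p})\). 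In both cases the square equals \(\sigma_{p,s}p^{s-2j}\), independent of \(v_0\) and of \(a\). Hence \(G_{p^s}(av^2)^2=\sigma_{p,s}p^{s+2j}\). The number of \(v\pmod* {p^s}\) with valuation exactly \(j\) is \(p^{s-j}(1-p^{-1})\). So this valuation class contributes
\[
  p^{-3s}\cdot p^{s-j}(1-p^{-1})\cdot\sigma_{p,s}\,p^{s+2j}=\sigma_{p,s}(1-p^{-1})p^{-s+j}.
\]
Summing over \(0\le j\le r-1\) and adding the degenerate range gives \eqref{eq:odd-prime-formula}. In particular \(C_{p^s}(a)\) is real.

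For positivity, only the case \(\sigma_{p,s}=-1\) needs an argument; this forces \(s\) odd, so \(s=2r-1\). Summing the geometric series,
\[
  (1-p^{-1})\sum_{j=0}^{r-1}p^{-s+j}=p^{-s-1}(p^r-1)=p^{-2r}(p^r-1)<p^{-r},
\]
so \(C_{p^s}(a)>0\). If \(\sigma_{p,s}=1\), every term is positive.

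I expect no step to be a real obstacle. The points needing care are the bookkeeping of the valuation classes (including \(v=0\)) and checking that the parity of \(s-2j\) matches that of \(s\), which is what makes the sign \(\sigma_{p,s}\) uniform in \(j\).
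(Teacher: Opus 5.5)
Your proposal is correct and follows essentially the same route as the paper's proof. Both write the inner sum as $G_{p^s}(av^2)^2$ and split $v$ by $p$-adic valuation, with the classes $j\ge r$ contributing $p^{-r}$ and each $j<r$ contributing $\sigma_{p,s}(1-p^{-1})p^{-s+j}$. Both then handle positivity in the case $\sigma_{p,s}=-1$, $s=2r-1$ by the same geometric-series computation, which gives $p^{-r}-p^{-2r}<p^{-r}$.
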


\begin{proof}
For fixed \(v\pmod*{p^s}\), the sum over \(x,y\) factors as
\[
  \sum_{x,y\pmod*{p^s}}
  e\!\left(\frac{av^2(x^2+y^2)}{p^s}\right)
  =
  G_{p^s}(av^2)^2.
\]
We split the residues \(v\pmod{p^s}\) according to their exact divisibility by
\(p\).  Write \(p^j\parallel v\) to mean that
\(p^j\mid v\) but \(p^{j+1}\nmid v\).

First suppose \(p^j\parallel v\) with \(2j<s\).  Write $v=p^j u$ with $p\nmid u$.
Then $av^2=p^{2j}au^2$ with \(p\nmid au^2\). The standard formulae for quadratic Gauss sums above give
\[
  G_{p^s}(av^2)
  =
  p^{2j}G_{p^{s-2j}}(au^2)
\]
and 
\[
  G_{p^{s-2j}}(au^2)^2
  =
  \begin{cases}
    p^{s-2j}, & \text{if $s-2j$ is even},\\[4pt]
    \left(\dfrac{-1}{p}\right)p^{s-2j}, & \text{if $s-2j$ is odd}.
  \end{cases}
\]
The parity of \(s-2j\) is the same as the parity of \(s\).  Hence
\[
  G_{p^s}(av^2)^2
  =
  p^{4j}G_{p^{s-2j}}(au^2)^2
  =
  \sigma_{p,s}p^{s+2j}.
\]
The number of residues \(v\pmod* {p^s}\) with \(p^j\parallel v\) is
\[
  p^{s-j}-p^{s-j-1}=p^{s-j}(1-p^{-1}).
\]

Now recall that \(r=\lceil s/2\rceil\).  The remaining residues are exactly those
with \(p^r\mid v\).  For these residues we have \(p^s\mid v^2\), so the phase
is identically zero in \(x,y\), and the \(x,y\)-sum is \(p^{2s}\).  There are
\(p^{s-r}\) such residues \(v\pmod* {p^s}\).  Therefore
\[
\begin{aligned}
  C_{p^s}(a)
  &=
  p^{-3s}
  \left(
    p^{s-r}p^{2s}
    +
    \sum_{j=0}^{r-1}
    p^{s-j}(1-p^{-1})\sigma_{p,s}p^{s+2j}
  \right)  \\
  &=
  p^{-r}
  +
  \sigma_{p,s}(1-p^{-1})\sum_{j=0}^{r-1}p^{-s+j},
\end{aligned}
\]
which is \eqref{eq:odd-prime-formula}.

It remains to check positivity.  If \(s\) is even, then
\(\sigma_{p,s}=1\), so every term is positive.  If \(s\) is odd and
\(p\equiv1\pmod4\), then again \(\sigma_{p,s}=1\).  The only case requiring
attention is \(s\) odd and \(p\equiv3\pmod4\), when \(\sigma_{p,s}=-1\).
Write \(s=2r-1\).  Then
\[
  (1-p^{-1})\sum_{j=0}^{r-1}p^{-s+j}
  =
  (1-p^{-1})p^{-s}\frac{p^r-1}{p-1}
  =
  p^{-s-1}(p^r-1).
\]
Since \(s=2r-1\), this equals
\[
  p^{-2r}(p^r-1)=p^{-r}-p^{-2r}.
\]
Therefore in this case
\[
  C_{p^s}(a)
  =
  p^{-r}-(p^{-r}-p^{-2r})
  =
  p^{-2r}>0.
\]
This proves both the formula and the claimed positivity.
\end{proof}

\begin{proposition}[Powers of $2$]
\label{prop:dyadic-sector}
Let $s\in\mathbb{N}$ and let $a\in\Z$ be odd. Put $r=\lceil s/2\rceil$. Then
\[
  \Re C_{2^s}(a)=2^{-r},
  \qquad
  |\Im C_{2^s}(a)|\le 2^{-r}-2^{-s}.
\]
Consequently,
\[
  |\arg C_{2^s}(a)|<\frac\pi4.
\]
\end{proposition}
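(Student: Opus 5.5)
The plan is to follow the proof of Proposition~\ref{prop:odd-prime-powers} closely: first sum over \(x,y\), then split the residues \(v\pmod*{2^s}\) by their exact power of \(2\). For fixed \(v\), the \(x,y\)-sum factors as \(G_{2^s}(av^2)^2\).

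\emph{Residues with \(2^r\mid v\).} Here \(2^s\mid v^2\), so the phase vanishes and the inner sum is \(2^{2s}\). There are \(2^{s-r}\) such residues, so they contribute exactly \(2^{-3s}\cdot 2^{s-r}\cdot 2^{2s}=2^{-r}\) to \(C_{2^s}(a)\). This term is real and positive.

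\emph{Residues with \(2^j\parallel v\), \(2j<s\).} Write \(v=2^ju\) with \(u\) odd, and set \(m=s-2j\ge1\). The reduction rule for Gauss sums gives
\[
G_{2^s}(av^2)=2^{2j}G_{2^m}(au^2).
\]
Now square the dyadic evaluations.
\begin{itemize}
\item If \(m=1\), the square is \(0\).
\item If \(m\ge2\) is even, it is \(2^m(1+i^{b})^2=2^{m+1}i^{b}\) with \(b=au^2\) odd, since \(i^{2b}=-1\).
\item If \(m\ge3\) is odd, it is \(2^{m+1}e(b/4)=2^{m+1}i^{b}\).
\end{itemize}
Because \(u\) is odd, \(u^2\equiv1\pmod 8\), so \(i^{b}=i^{a}\) is independent of \(u\). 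Hence for \(m\ge2\),
\[
G_{2^s}(av^2)^2=2^{s+2j+1}i^{a},
\]
which is purely imaginary. There are \(2^{s-j-1}\) residues with \(2^j\parallel v\). Their total contribution is therefore
\[
2^{-3s}\cdot2^{s-j-1}\cdot2^{s+2j+1}\,i^a=2^{-s+j}\,i^a .
\]

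\emph{Assembling.} This gives
\[
C_{2^s}(a)=2^{-r}+i^{a}\sum_{0\le j\le (s-2)/2}2^{-s+j}.
\]
Since \(i^a=\pm i\), we get \(\Re C_{2^s}(a)=2^{-r}\) at once. The imaginary part has absolute value equal to the geometric sum.
\begin{itemize}
\item If \(s\) is even, \(j\) runs to \(r-1\), and the sum is \(2^{-s}(2^{r}-1)=2^{-r}-2^{-s}\).
\item If \(s\) is odd, \(j\) runs only to \(r-2\), and the sum is \(2^{-s}(2^{r-1}-1)\le 2^{-r}-2^{-s}\). This covers \(s=1\), where the sum is empty.
\end{itemize}
Thus \(|\Im C_{2^s}(a)|\le 2^{-r}-2^{-s}<2^{-r}=\Re C_{2^s}(a)\). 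Since the real part is positive and strictly exceeds the absolute value of the imaginary part, \(|\arg C_{2^s}(a)|<\pi/4\).

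The only delicate step is the middle case: checking that every nonzero squared Gauss sum equals the same \(2^{m+1}i^{a}\), uniformly in parity and in \(u\). This needs the identity \((1+i^b)^2=2i^b\) and the observation \(e(b/4)=i^b\) with \(b\equiv a\pmod 4\). Once that uniformity is established, the rest is the same geometric-series bookkeeping as in Proposition~\ref{prop:odd-prime-powers}.
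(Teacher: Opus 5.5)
Your proposal is correct and follows the paper's proof essentially step for step: the same split of the residues $v$ modulo $2^s$ by exact $2$-adic valuation, the same Gauss-sum reduction, the same residue counts, and the same geometric sum. The only difference is that you observe every nonzero squared Gauss sum equals $2^{m+1}i^{a}$. This gives the exact evaluation $C_{2^s}(a)=2^{-r}+i^{a}(2^{-r}-2^{-s})$, whereas the paper only applies the triangle inequality; in particular, your odd-$s$ inequality is actually an equality.
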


\begin{proof}
For fixed \(v\pmod* {2^s}\), the sum over \(x,y\) factors as
\[
  \sum_{x,y\pmod* 2^s}
  e\!\left(\frac{av^2(x^2+y^2)}{2^s}\right)
  =
  G_{2^s}(av^2)^2.
\]
We split the residues \(v\pmod* {2^s}\) according to their exact divisibility by
\(2\).  Write \(2^j\parallel v\) to mean that \(2^j\mid v\) but
\(2^{j+1}\nmid v\).

First suppose \(2^j\parallel v\) with \(2j<s\).  Write $v=2^j u$ with $u$ odd.
Then $av^2=2^{2j}au^2$ with $2\nmid au^2$. By the standard formulae for quadratic Gauss sums above, we have
\[
  G_{2^s}(av^2)
  =
  2^{2j}G_{2^{s-2j}}(au^2).
\]
Put \(m=s-2j\).  If \(m=1\), then $G_2(au^2)=0,$
since \(au^2\) is odd. 

If \(m\ge2\), then the standard Gauss-sum evaluation shows that
\(G_{2^m}(au^2)^2\) is purely imaginary.  Indeed, if \(m\) is even, then
\[
  G_{2^m}(au^2)=2^{m/2}\bigl(1+i^{au^2}\bigr),
\]
and since \(au^2\) is odd, we have $\bigl(1+i^{au^2}\bigr)^2\in i\mathbb R.$
If \(m\ge3\) is odd, then
\[
  G_{2^m}(au^2)=2^{(m+1)/2}e(au^2/8),
\]
so
\[
  G_{2^m}(au^2)^2
  =
  2^{m+1}e(au^2/4)\in i\mathbb R.
\]

Now consider the remaining residues, namely those with $2^r\mid v.$
For these residues we have \(2^s\mid v^2\), so the phase is identically zero
in \(x,y\).  Hence the \(x,y\)-sum is \(2^{2s}\).  There are \(2^{s-r}\) such
residues \(v\pmod* {2^s}\), so their normalized contribution to \(C_{2^s}(a)\) is
\[
  2^{-3s}\cdot 2^{s-r}\cdot 2^{2s}
  =
  2^{-r}.
\]
This contribution is real and positive. Since the values of $v$ satisfying $2^r \nmid v$ are purely imaginary, we have 
\[
  \Re C_{2^s}(a)=2^{-r}=2^{-\lceil s/2\rceil}.
\]

It remains to bound the imaginary part of $C_{2^s}(a)$, which arises from the terms $G_{2^s}(av^2)^2=2^{4j}G_{2^{m}}(au^2)^2$ when \(m\ge2\).
The above evaluations give $|G_{2^m}(au^2)^2|=2^{m+1}.$
Therefore
\[
  |G_{2^s}(av^2)^2|
  =
  2^{4j}|G_{2^m}(au^2)^2|
  =
  2^{4j}2^{m+1}
  =
  2^{s+2j+1}.
\]
The number of residues \(v\pmod* {2^s}\) with \(2^j\parallel v\) is $2^{s-j}-2^{s-j-1}=2^{s-j-1}.$
Thus the total normalized magnitude of the contribution from \(2^j\parallel v\) to $C_{2^s}(a)$, in the
case \(m\ge2\), is at most
\[
  2^{-3s}\cdot 2^{s-j-1}\cdot 2^{s+2j+1}
  =
  2^{-s+j}.
\]
Hence,
\[
  |\Im C_{2^s}(a)|
  \le
  \sum_{\substack{0\le j<r\\ s-2j\ge2}}2^{-s+j}=\sum_{j=0}^{\lfloor s/2\rfloor-1}2^{-s+j}=2^{-s+\lfloor s/2\rfloor}-2^{-s}=2^{-r}-2^{-s}<
  2^{-r}=\Re C_{2^s}(a),
\]
since $r+\lfloor s/2\rfloor=\lceil s/2\rceil+\lfloor s/2\rfloor=s$.
As a consequence, $|\arg C_{2^s}(a)|<\frac\pi4.$
\end{proof}

\begin{lemma}[Multiplicativity]
\label{lem:multiplicativity}
If $q=q_1q_2$ with $(q_1,q_2)=1$, and if $(a,q)=1$, then
\[
  C_q(a)=C_{q_1}(a\overline{q_2})C_{q_2}(a\overline{q_1}),
\]
where \(\overline{q_1}\) is the inverse of \(q_1\) modulo \(q_2\) and \(\overline{q_2}\) is the inverse of \(q_2\) modulo \(q_1\), namely, $q_2\overline{q_2}\equiv1\pmod {q_1}$ and
$q_1\overline{q_1}\equiv1\pmod {q_2}$.
\end{lemma}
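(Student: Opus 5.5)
We prove this with the Chinese remainder theorem, parametrizing residues mod \(q\) in the standard mixed-radix way. Since \((q_1,q_2)=1\), the map
\[
  (\Z/q_1\Z)\times(\Z/q_2\Z)\to\Z/q\Z,\qquad (u_1,u_2)\mapsto u_1q_2+u_2q_1
\]
is a bijection. We apply it simultaneously to each of the three variables: \(v=v_1q_2+v_2q_1\), \(x=x_1q_2+x_2q_1\), \(y=y_1q_2+y_2q_1\), with \(v_i,x_i,y_i\) running over residues modulo \(q_i\). The sum defining \(C_q(a)\) in \eqref{eq:Cq-def} then becomes a sum over \((v_1,x_1,y_1)\pmod*{q_1}\) and \((v_2,x_2,y_2)\pmod*{q_2}\).

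The key step is to expand the phase modulo \(q\). Write \(v^2(x^2+y^2)=(vx)^2+(vy)^2\). We have
\[
  vx=v_1x_1q_2^2+v_2x_2q_1^2+q_1q_2(v_1x_2+v_2x_1).
\]
When this is squared, every cross term contains the factor \(q_1q_2=q\), so
\[
  (vx)^2\equiv v_1^2x_1^2q_2^4+v_2^2x_2^2q_1^4\pmod*{q},
\]
and similarly for \(vy\). It is simplest to note directly that
\[
  v^2(x^2+y^2)\equiv q_2^4\,v_1^2(x_1^2+y_1^2)+q_1^4\,v_2^2(x_2^2+y_2^2)\pmod*{q}.
\]
Hence the exponential factors as
\[
  e\!\left(\frac{aq_2^3v_1^2(x_1^2+y_1^2)}{q_1}\right)
  e\!\left(\frac{aq_1^3v_2^2(x_2^2+y_2^2)}{q_2}\right),
\]
and the normalization splits as \(q^{-3}=q_1^{-3}q_2^{-3}\). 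This gives
\[
  C_q(a)=C_{q_1}(aq_2^3)\,C_{q_2}(aq_1^3).
\]

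It remains to replace \(aq_2^3\) by \(a\overline{q_2}\) modulo \(q_1\). The sum \(C_{q_1}(b)\) is invariant under \(b\mapsto bc^4\) for \(c\) coprime to \(q_1\). Indeed, one can substitute \(v\mapsto cv\), which multiplies the phase by \(c^2\), and \(x\mapsto cx\), \(y\mapsto cy\), which multiplies it by another \(c^2\); both substitutions are bijections modulo \(q_1\). Taking \(c=\overline{q_2}\) gives \(aq_2^3\overline{q_2}^{\,4}\equiv a\overline{q_2}\pmod*{q_1}\), so \(C_{q_1}(aq_2^3)=C_{q_1}(a\overline{q_2})\). The same argument modulo \(q_2\) gives \(C_{q_2}(aq_1^3)=C_{q_2}(a\overline{q_1})\).

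The only delicate point is this bookkeeping: one should check that the multiplier obtained is \(q_2^4\), not \(q_2^2\), and that invariance under fourth powers is enough to pass to \(\overline{q_2}\). Since \(q_2^3\cdot\overline{q_2}^{\,4}\equiv\overline{q_2}\pmod*{q_1}\), it is. Finally, \(a\overline{q_2}\) is coprime to \(q_1\) and \(a\overline{q_1}\) is coprime to \(q_2\), so both factors are of the form considered in \eqref{eq:Cq-def}.
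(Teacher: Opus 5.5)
Your proof is correct, but it takes a slightly different route from the paper's. Both arguments factor the sum via the Chinese remainder theorem. The paper uses the componentwise correspondence $v\leftrightarrow(v_1,v_2)$ together with the identity $e\bigl(n/(q_1q_2)\bigr)=e(\overline{q_2}n/q_1)\,e(\overline{q_1}n/q_2)$ for integers $n$. That produces the twisted arguments $a\overline{q_2}$ and $a\overline{q_1}$ in one step, and it would work verbatim for any integer polynomial phase. Your mixed-radix parametrization $v=v_1q_2+v_2q_1$ instead lands on $C_{q_1}(aq_2^3)\,C_{q_2}(aq_1^3)$. You then need an extra step, the invariance $C_{q_1}(b)=C_{q_1}(bc^4)$ for units $c$, which relies on the phase $v^2(x^2+y^2)$ being homogeneous of degree $4$ in $(v,x,y)$. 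Your bookkeeping there ($q_2^3\overline{q_2}^{\,4}\equiv\overline{q_2}\pmod{q_1}$) is right. So your version trades the partial-fraction identity for a homogeneity argument specific to this sum. Note also that for the paper's later use (Corollary~\ref{cor:local-sector}), your intermediate form $C_{q_1}(aq_2^3)\,C_{q_2}(aq_1^3)$ would already suffice, since only the fact that each argument is a unit modulo the corresponding prime power is used there.
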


\begin{proof}
By the Chinese remainder theorem, each residue class modulo \(q=q_1q_2\)
corresponds uniquely to a pair of residue classes modulo \(q_1\) and \(q_2\).
Write
\[
  v\leftrightarrow (v_1,v_2),\qquad
  x\leftrightarrow (x_1,x_2),\qquad
  y\leftrightarrow (y_1,y_2),
\]
where the first component is taken modulo \(q_1\) and the second modulo
\(q_2\). Since
\[
  \frac{\overline{q_2}}{q_1}+\frac{\overline{q_1}}{q_2}
  =
  \frac{q_2\overline{q_2}+q_1\overline{q_1}}{q_1q_2}\equiv \frac{1}{q_1q_2}\pmod*{1},
\]
which follows from \(q_2\overline{q_2}+q_1\overline{q_1}\equiv1\pmod{q_1q_2}\), we have, for any integer \(n\),
\[
  e\!\left(\frac{n}{q_1q_2}\right)
  =
  e\!\left(\frac{\overline{q_2}n}{q_1}\right)
  e\!\left(\frac{\overline{q_1}n}{q_2}\right).
\]
Let us now apply this with \(n=a v^2(x^2+y^2)\).  Modulo \(q_1\), this integer is
congruent to \(a v_1^2(x_1^2+y_1^2)\), and modulo \(q_2\), it is congruent to
\(a v_2^2(x_2^2+y_2^2)\).
Therefore
\[
  e\!\left(\frac{a v^2(x^2+y^2)}{q_1q_2}\right)
  =
  e\!\left(
    \frac{a\overline{q_2}v_1^2(x_1^2+y_1^2)}{q_1}
  \right)
  e\!\left(
    \frac{a\overline{q_1}v_2^2(x_2^2+y_2^2)}{q_2}
  \right).
\]
Hence,
\[
\begin{aligned}
  C_q(a)
  &=
  (q_1q_2)^{-3}
  \sum_{v,x,y\pmod* {q_1q_2}}
  e\!\left(\frac{a v^2(x^2+y^2)}{q_1q_2}\right) \\
  &=
  q_1^{-3}q_2^{-3}
  \sum_{v_1,x_1,y_1\pmod* {q_1}}~
  \sum_{v_2,x_2,y_2\pmod* {q_2}}
  e\!\left(
    \frac{a\overline{q_2}v_1^2(x_1^2+y_1^2)}{q_1}
  \right)
  e\!\left(
    \frac{a\overline{q_1}v_2^2(x_2^2+y_2^2)}{q_2}
  \right) \\
  &=
  \left[
  q_1^{-3}
  \sum_{v_1,x_1,y_1\pmod* {q_1}}
  e\!\left(
    \frac{a\overline{q_2}v_1^2(x_1^2+y_1^2)}{q_1}
  \right)
  \right]
  \left[
  q_2^{-3}
  \sum_{v_2,x_2,y_2\pmod* {q_2}}
  e\!\left(
    \frac{a\overline{q_1}v_2^2(x_2^2+y_2^2)}{q_2}
  \right)
  \right] \\
  &=
  C_{q_1}(a\overline{q_2})C_{q_2}(a\overline{q_1}).
\end{aligned}
\]
This proves the lemma.
\end{proof}

\begin{corollary}
\label{cor:local-sector}
For every $q\ge1$ and $(a,q)=1$,
\[
  |\arg C_q(a)|<\frac\pi4.
\]
In particular $\Re C_q(a)>0$.
\end{corollary}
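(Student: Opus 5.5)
The plan is to factor $q=2^{s_0}p_1^{s_1}\cdots p_k^{s_k}$ and reduce everything to the prime-power results via Lemma~\ref{lem:multiplicativity}. We handle $q=1$ first: $C_1(a)=1$, so the claim is trivial. Otherwise, we apply Lemma~\ref{lem:multiplicativity} repeatedly, by induction on the number of distinct prime factors. This writes $C_q(a)$ as a product $\prod_{p^s\parallel q} C_{p^s}(a_p)$, where each $a_p$ is an integer coprime to $p$. Indeed, $a\overline{q_2}$ is coprime to $q_1$ because both $a$ and $\overline{q_2}$ are units modulo $q_1$. Note that $C_{q_1}(b)$ depends only on $b$ modulo $q_1$, so the choice of representative for the inverse is irrelevant.

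Next we bound the argument of each factor.
\begin{itemize}
\item For odd $p$, Proposition~\ref{prop:odd-prime-powers} shows that $C_{p^s}(a_p)$ is real and positive, so its argument is $0$.
\item For $p=2$ (only if $2\mid q$), we have $a_2$ odd. Proposition~\ref{prop:dyadic-sector} then gives $|\arg C_{2^{s_0}}(a_2)|<\pi/4$.
\end{itemize}

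Finally, since the arguments of nonzero complex numbers add modulo $2\pi$, the product is a positive real number times $C_{2^{s_0}}(a_2)$, or times $1$ if $q$ is odd. Hence $\arg C_q(a)=\arg C_{2^{s_0}}(a_2)$ exactly, with no wrap-around issue, and $|\arg C_q(a)|<\pi/4$. In particular $C_q(a)\neq 0$ and $\Re C_q(a)=|C_q(a)|\cos(\arg C_q(a))>0$.

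There is no real obstacle here. The only points needing care are checking that the twisted arguments $a\overline{q_2}$ stay coprime to the relevant modulus, so that the prime-power propositions apply, and noting that at most one factor, the dyadic one, is non-real. This is what prevents the sector angles from accumulating.
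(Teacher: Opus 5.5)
Your proof is correct and follows the paper's argument essentially verbatim. You factor $C_q(a)$ into prime-power pieces via Lemma~\ref{lem:multiplicativity}, use Proposition~\ref{prop:odd-prime-powers} to see that the odd factors are positive reals, and then read off $\arg C_q(a)$ from the single dyadic factor via Proposition~\ref{prop:dyadic-sector}; your side remarks on coprimality of the twisted arguments, dependence only on residues, and nonvanishing of $C_q(a)$ are all correct.
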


\begin{proof}
The case \(q=1\) is immediate, since \(C_1(a)=1\).  Now suppose \(q>1\), and
write $q=2^s q_0$ with $q_0 \text{ odd}$, where \(s\ge0\).  By repeated applications of Lemma \ref{lem:multiplicativity}, the
complete sum \(C_q(a)\) factors as a product of complete sums modulo the prime
power divisors of \(q\).  More precisely, the arguments appearing in the
factors are obtained from \(a\) by multiplication by units modulo the
corresponding prime powers, so they remain coprime to those prime powers.

For each odd prime power \(p^k\mid q_0\), Proposition
\ref{prop:odd-prime-powers} shows that the corresponding factor
\(C_{p^k}(\cdot)\) is positive real.  Hence the product of all odd
prime-power factors is positive real.  If \(s=0\), there is no dyadic factor,
and therefore \(C_q(a)\) itself is positive real, and we are done.

If \(s\ge1\), the only possible contribution to the argument comes from the
dyadic factor \(C_{2^s}(\cdot)\).  Since the argument of this dyadic factor is
strictly between \(-\pi/4\) and \(\pi/4\) by Proposition
\ref{prop:dyadic-sector}, and multiplication by a positive real number does
not change the argument, we obtain $|\arg C_q(a)|<\frac{\pi}{4}.$
In particular \(C_q(a)\) lies in the open right half-plane, so $\Re C_q(a)>0.$
\end{proof}

\section{Major arcs}
\label{sec:major}

We define the major arcs as follows: 
\begin{equation}
  \cM=
  \bigcup_{1\le q\le Q}
  \bigcup_{\substack{a\pmod* q\\(a,q)=1}}
  \left\{\theta\in\T:
  \left\|\theta-\frac aq\right\|\le \frac{Q}{qH^4}
  \right\},
  \qquad Q=H^{1-4\sigma},
  \label{eq:maj-arcs}
\end{equation}
where $H\ge1$, and $0<\sigma<1/4$ is fixed. In addition, the main term in the major arc estimate will feature the following function: for $t\in\R$ define
\begin{equation}\label{Wdef}
  W(t)=
  \frac{\displaystyle
  \int_{\R^2}U(X_1)U(X_2)(1-2\pi i t(X_1^2+X_2^2))^{-1/2}\,dX_1\,dX_2}{\displaystyle
  \left(\int_\R U(s)\,ds\right)^2},
\end{equation}
with the inverse square root taking its principal branch.  Since $1-2\pi it(X_1^2+X_2^2)$ lies in the closed right
half-plane, each factor $(1-2\pi it(X_1^2+X_2^2))^{-1/2}$ has argument between
$-\pi/4$ and $\pi/4$.  Since $U\ge0$ and the sector is convex, the same is true of the weighted average
$W(t)$ so that $|\arg W(t)|\le \pi/4.$
In addition we have $|W(t)|\le1$.  Since Corollary \ref{cor:local-sector} gives \(|\arg C_q(a)|<\pi/4\), the
product \(C_q(a)W(t)\) lies in the closed right half-plane.  Thus
\begin{equation}
  \Re\bigl(C_q(a)W(t)\bigr)\ge0
  \qquad((a,q)=1,\,t\in\R).
  \label{eq:local-arch-positive}
\end{equation}

We prove the major arc estimate via Poisson summation. 
\begin{lemma}[Major arc estimate]
\label{lem:lattice-summation}
Let \(J\ge1\) be fixed.  Let \(1\le q\le Q\), \((a,q)=1\), and
\[
  \theta=\frac aq+\beta,
  \qquad
  |\beta|\le \frac{Q}{qH^4}.
\]
Then
\begin{equation}
  \cB_H(\theta)
  =
  C_q(a)W(\beta H^4)\cB_H(0)
  +
  O_{U,J}\!\left(
    \cB_H(0)
    \bigl(q(1+|\beta|H^4)H^{-1}\bigr)^J
  \right).
  \label{eq:major-asymptotic}
\end{equation}
\end{lemma}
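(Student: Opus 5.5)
The plan is a residue-class splitting in all three variables modulo \(q\), followed by three-dimensional Poisson summation in the form \eqref{eq:F(n mod q)}. The zero frequency will give the main term and all nonzero frequencies will be negligible. Write \(\theta=a/q+\beta\) and set
\[
  f(v,x,y)=U\!\left(\tfrac xH\right)U\!\left(\tfrac yH\right)e^{-v^2/H^2}e\bigl(\beta v^2(x^2+y^2)\bigr),
  \qquad
  F(v,x,y)=e\!\left(\tfrac{av^2(x^2+y^2)}{q}\right).
\]
Here \(F\) is well defined on \((\Z/q\Z)^3\) and \(f\in\Sch(\R^3)\). Then \(\cB_H(\theta)=\frac12\sum_{n\in\Z^3}F(n)f(n)\), and \eqref{eq:F(n mod q)} gives
\[
  \cB_H(\theta)=\frac{q^{-3}}2\sum_{\ell\in\Z^3}S(\ell)\,\widehat f(\ell/q),
  \qquad
  S(\ell)=\sum_{r\pmod* q}F(r)e(r\cdot\ell/q).
\]

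\emph{Zero frequency.} Since \(S(0)=q^3C_q(a)\), the \(\ell=0\) term equals \(\frac12C_q(a)\int_{\R^3}f\). I will do the \(v\)-integral first. It is a Gaussian integral with complex parameter \(H^{-2}-2\pi i\beta(x^2+y^2)\), whose real part is \(H^{-2}>0\), so
\[
  \int_\R e^{-v^2(H^{-2}-2\pi i\beta(x^2+y^2))}\,dv=\sqrt\pi\,H\bigl(1-2\pi i\beta H^2(x^2+y^2)\bigr)^{-1/2},
\]
with the principal branch, justified by analytic continuation from real parameters. Next I substitute \(x=HX_1\), \(y=HX_2\). This turns \(\beta H^2(x^2+y^2)\) into \(\beta H^4(X_1^2+X_2^2)\), and by \eqref{Wdef} and \eqref{CUdef} the zero-frequency term becomes \(C_q(a)W(\beta H^4)\,C_UH^3\). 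Lemma \ref{lem:normalization} replaces \(C_UH^3\) by \(\cB_H(0)\) at a cost of \(O(H^{-A})\). Using \(|C_q(a)|\le1\) and \(|W|\le1\), this cost is absorbed into the stated error.

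\emph{Nonzero frequencies.} I bound \(|S(\ell)|\le q^3\) trivially, so the remaining error is at most \(\sum_{\ell\ne0}|\widehat f(\ell/q)|\). Put \(\Lambda=(1+|\beta|H^4)/H\). For \(\ell\ne0\), choose a coordinate \(i\) with \(|\ell_i|=\|\ell\|_\infty\) and integrate by parts \(K\) times in that coordinate. This gives
\[
  |\widehat f(\ell/q)|\ll_{U,K}H^3\bigl(q\Lambda/\|\ell\|_\infty\bigr)^K,
\]
provided each derivative in \(v\), \(x\) or \(y\) costs at most a factor \(\Lambda\) times a polynomial in \(|v|/H\), and that polynomial is absorbed by the Gaussian. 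The polynomial factors and the resulting integral are \(\ll H^3\) after rescaling. I would check the derivative bound as follows.
\begin{itemize}
  \item For \(\partial_x\) (and likewise \(\partial_y\)): the cutoff contributes \(H^{-1}\), and the phase contributes \(\beta v^2x\ll|\beta|H^3(v/H)^2\le\Lambda(1+v^2/H^2)\).
  \item For \(\partial_v\): the Gaussian contributes \(v/H^2\), and the phase contributes \(\beta v(x^2+y^2)\ll|\beta|H^3(|v|/H)\).
\end{itemize}
Higher derivatives behave the same way by the Leibniz rule. The fact that \(v\) is not compactly supported is handled by the \(e^{-v^2/H^2}\) weight, which dominates every polynomial in \(v/H\).

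\emph{Summing and conclusion.} I take \(K=J+4\) and use that \(\#\{\ell:\|\ell\|_\infty=m\}\ll m^2\). If \(q\Lambda\le1\), the nonzero frequencies contribute \(\ll H^3(q\Lambda)^{K}\sum_m m^{2-K}\ll \cB_H(0)(q\Lambda)^J\), which is the claimed error. If \(q\Lambda>1\), the estimate \eqref{eq:major-asymptotic} is trivial: \(|\cB_H(\theta)|\le\cB_H(0)\) and the main term is at most \(\cB_H(0)\), so their difference is \(\le2\cB_H(0)\le2\cB_H(0)(q\Lambda)^J\).

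The step needing the most care is the uniform integration-by-parts bound. Each differentiation must lose only \(\Lambda\) and never an extra power of \(H\), even though \(\beta\) enters through the cubic-scale phase \(\beta v^2(x^2+y^2)\) and the \(v\)-variable is unbounded. If this bookkeeping becomes awkward, I would instead rescale to \(g(V,X,Y)=f(HV,HX,HY)\). Each derivative of \(g\) is then bounded by \((1+|\beta|H^4)\) times a polynomial in \(V\), multiplied by \(e^{-V^2}\). The standard Schwartz decay estimate for \(\widehat g\) at frequency \(H\ell/q\) then gives the same bound directly.
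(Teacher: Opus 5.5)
Your proposal is correct and follows essentially the same route as the paper. Both arguments use Poisson summation over residue classes modulo $q$ via \eqref{eq:F(n mod q)}, evaluate the zero frequency as the Gaussian $v$-integral giving $C_q(a)W(\beta H^4)C_UH^3$ and then apply Lemma \ref{lem:normalization}, and handle $\ell\ne0$ by repeated integration by parts together with the trivial bound $q^3$ on the complete sums.

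The one minor difference is in the final step. You dispose of the case $q(1+|\beta|H^4)H^{-1}>1$ trivially via $|\cB_H(\theta)|\le\cB_H(0)$. The paper instead uses the major-arc hypotheses to get $q(1+|\beta|H^4)\ll Q=H^{1-4\sigma}$, which lets it lower the exponent $M$ to $J$; your version does not need those hypotheses.
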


\begin{proof}
We start from the definition \eqref{eq:BH-def}:
\[
  \cB_H\!\left(\frac aq+\beta\right)
  =
  \frac12
  \sum_{v,x,y\in\Z}
  e\!\left(\frac aqv^2(x^2+y^2)\right)
  \Phi_\beta(v,x,y),
\]
where
\[
  \Phi_\beta(v,x,y)
  =
  U\!\left(\frac{x}{H}\right)U\!\left(\frac{y}{H}\right)
  \exp\!\left(-\frac{v^2}{H^2}+2\pi i\beta v^2(x^2+y^2)\right)\in\Sch(\R^3).
\]

We split \(v,x,y\) into residue classes modulo \(q\).  On the residue class
\[
  v\equiv r,\qquad x\equiv s,\qquad y\equiv t\pmod q,
\]
the rational phase is constant:
\[
  e\!\left(\frac aqv^2(x^2+y^2)\right)
  =
  e\!\left(\frac aq r^2(s^2+t^2)\right).
\]
Applying Poisson summation to \(\Phi_\beta\) on each residue class (see \eqref{eq:F(n mod q)}) gives
\begin{equation}\label{poisson}
\begin{aligned}
  \cB_H(a/q+\beta)
  &= \frac{1}{2}\sum_{v,x,y\in\Z}
  e\!\left(\frac aqv^2(x^2+y^2)\right)\Phi_\beta(v,x,y) \\
  &=
  \frac{1}{2}q^{-3}\sum_{\ell\in\Z^3}
  \left(
    \sum_{r,s,t\pmod* q}
    e\!\left(
      \frac{ar^2(s^2+t^2)+\ell_1r+\ell_2s+\ell_3t}{q}
    \right)
  \right)
  \widehat{\Phi_\beta}\!\left(\frac{\ell}{q}\right).
\end{aligned}
\end{equation}
We claim the zero frequency \(\ell=0\) gives the main term.  Notice that
\[
  q^{-3}
  \sum_{r,s,t\pmod* q}
  e\!\left(\frac{ar^2(s^2+t^2)}q\right)
  =
  C_q(a)
\]
by definition, so the contribution from $\ell=0$ in \eqref{poisson} is $\frac12 C_q(a)\widehat{\Phi_\beta}(0).$ We now evaluate \(\widehat{\Phi_\beta}(0)\).  Scaling
\[
  x=HX_1,\qquad y=HX_2,\qquad v=HT,
\]
gives
\[
  \frac12\widehat{\Phi_\beta}(0)
  =
  \frac12H^3
  \int_{\R^3}
  U(X_1)U(X_2)
  \exp\!\left(
    -T^2+2\pi i\beta H^4T^2(X_1^2+X_2^2)
  \right)
  \,dT\,dX_1\,dX_2.
\]
For fixed \(X_1,X_2\),
\[
  \int_\R
  \exp\!\left(-(1-2\pi i\beta H^4(X_1^2+X_2^2))T^2\right)\,dT
  =
  \sqrt\pi(1-2\pi i\beta H^4(X_1^2+X_2^2))^{-1/2}.
\]
which is a consequence of the classical formula
\[\int_\R e^{-zu^2}\,du=\sqrt{\frac{\pi}{z}}\]
for all $z\in\C$ with $\Re(z)>0$. Therefore, by the definition of \(C_U\) and \(W\) (\eqref{CUdef} and \eqref{Wdef}), we have
\[
  \frac12\widehat{\Phi_\beta}(0)
  =
  W(\beta H^4)C_UH^3.
\]
By Lemma \ref{lem:normalization} applied with \(A=J\),
\[
  C_UH^3
  =
  \cB_H(0)
  +
  O_{U,J}(H^{-J}).
\]
Since \(|C_q(a)|\le1\), \(|W(\beta H^4)|\le1\), and
\[
  H^{-J}
  \ll_U
  \cB_H(0)
  \bigl(q(1+|\beta|H^4)H^{-1}\bigr)^J,
\]
the contribution to \eqref{poisson} from $\ell=0$ is
\[
  C_q(a)W(\beta H^4)\cB_H(0)
  +
  O_{U,J}\!\left(
    \cB_H(0)
    \bigl(q(1+|\beta|H^4)H^{-1}\bigr)^J
  \right).
\]

It remains to bound the contribution from $\ell\neq 0$. By scaling again,
\[
  x=HX_1,\qquad y=HX_2,\qquad v=HT,
\]
 \(\Phi_\beta(v,x,y)\) becomes
\begin{equation}\label{scaledPhi}
  U(X_1)U(X_2)
  \exp\!\left(-T^2+2\pi i\beta H^4T^2(X_1^2+X_2^2)\right).
\end{equation}
The immediate goal is to bound \(|\widehat{\Phi_\beta}(\xi)|\) using
integration by parts. Fix a large positive integer $M>J$. 
We estimate its derivatives of total order at most \(M\).  Derivatives of \(U(X_1)U(X_2)\) are of size \(O_{U,M}(1)\).  For the exponential factor, each
derivative produces factors coming from derivatives of $-T^2+2\pi i\beta H^4T^2(X_1^2+X_2^2)$ On the support of \(U(X_1)U(X_2)\), the variables \(X_1,X_2\) remain in the fixed compact set \(\supp U\subseteq[1,2]\).  Hence all polynomial factors in
\(X_1,X_2\) are bounded.  The derivatives of the phase are bounded by powers
of \(1+|2\pi i\beta H^4|\) times powers of \(1+|T|\), and after at most \(M\)
derivatives the resulting \(T\)-factor is $O_{U,M}((1+|T|)^{2M})$.  Since
\[
  \left|\exp(-T^2+2\pi i\beta H^4 T^2(X_1^2+X_2^2))\right|=e^{-T^2},
\]
we obtain, for \(j_0+j_1+j_2\le M\),
\[
  \left|
  \partial_T^{j_0}\partial_{X_1}^{j_1}\partial_{X_2}^{j_2}
  \left[
  U(X_1)U(X_2)
  \exp\!\left(-T^2+2\pi i\beta H^4T^2(X_1^2+X_2^2)\right)
  \right]
  \right|
  \ll_{U,M}
  (1+|\beta|H^4)^M(1+|T|)^{2M}e^{-T^2}.
\]
The left-hand side is supported in a fixed compact set in the
\((X_1,X_2)\)-variables, so this bound is integrable in \(T,X_1,X_2\).  Scaling
back to \(v,x,y\), we get
\[
  \left\|
  \partial_v^{j_0}\partial_x^{j_1}\partial_y^{j_2}\Phi_\beta
  \right\|_1
  \ll_{U,M}
  H^{3-j_0-j_1-j_2}(1+|\beta|H^4)^M
  \qquad(j_0+j_1+j_2\le M).
\]

Now suppose \(H|\xi|\ge1\). Choose a coordinate \(\xi_j\) with
\(|\xi_j|\gg|\xi|\). Integrating by parts \(M\) times in the corresponding coordinate \(z_j\) gives
\[
  \widehat{\Phi_\beta}(\xi)
  =
  (2\pi i\xi_j)^{-M}
  \int_{\R^3}
  \partial_{z_j}^M\Phi_\beta(z)e(-z\cdot \xi)\,dz,
\]
where \(z=(v,x,y)\in\R^3\). Hence
\[
  |\widehat{\Phi_\beta}(\xi)|
  \ll_{U,M}
  |\xi_j|^{-M}H^{3-M}(1+|\beta|H^4)^M
  \ll_{U,M}
  H^3(1+|\beta|H^4)^M(H|\xi|)^{-M}.
\]
Combining this with the trivial bound 
\[|\widehat{\Phi_\beta}(\xi)|\le \|\Phi_\beta\|_1\ll_U H^3,\qquad \forall\xi\in\R^3,\]
gives
\[
  |\widehat{\Phi_\beta}(\xi)|
  \ll_{U,M}
  H^3(1+|\beta|H^4)^M(1+H|\xi|)^{-M},\qquad \forall\xi\in\R^3.
\]
Thus the total contribution from $\ell\neq 0$ to \eqref{poisson} is 
\[
\begin{aligned}
  &\ll_{U,M}
  H^3(1+|\beta|H^4)^M
  \sum_{\ell\ne0}
  \left(1+\frac{H|\ell|}{q}\right)^{-M},
\end{aligned}
\]
where we used the trivial bound $O(q^3)$ for the complete exponential sum over $r,s,t\pmod*{q}$. Since
\[
  \sum_{\ell\ne0}
  \left(1+\frac{H|\ell|}{q}\right)^{-M}
  \le
  \left(\frac qH\right)^M
  \sum_{\ell\ne0}|\ell|^{-M}
  \ll_M
  \left(\frac qH\right)^M,
\]
the total contribution from \(\ell\neq0\) in \eqref{poisson} is
\[
  \ll_{U,M}
  H^3
  \left(\frac{q(1+|\beta|H^4)}{H}\right)^M.
\]
Since \(q\le Q\) and \(|\beta|H^4\le Q/q\), we have
\[
  q(1+|\beta|H^4)\le q+Q\ll Q=H^{1-4\sigma}.
\]
Therefore, for sufficiently large \(M>J\), this is
\[
  O_{U,J}\!\left(
    H^3\bigl(q(1+|\beta|H^4)H^{-1}\bigr)^J
  \right).
\]

Combining the zero-frequency contribution from $\ell=0$ and the contributions from $\ell\neq 0$ in \eqref{poisson} gives
\[
  \cB_H\!\left(\frac aq+\beta\right)
  =
  C_q(a)W(\beta H^4)\cB_H(0)
  +
  O_{U,J}\!\left(
    \cB_H(0)
    \bigl(q(1+|\beta|H^4)H^{-1}\bigr)^J
  \right).
\]
This proves \eqref{eq:major-asymptotic}.
\end{proof}

\begin{proposition}[Major-arc lower bound]
\label{prop:major-lower}
Fix $0<\sigma<1/4$ and take $Q=H^{1-4\sigma}$. Uniformly for $\theta\in\cM$,
\begin{equation}
  \Re \cF_H(\theta)\ge -C_{U,\sigma}H^{-1/2}.
  \label{eq:major-lower}
\end{equation}
\end{proposition}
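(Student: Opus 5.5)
The plan is to combine Lemma \ref{lem:lattice-summation} with the positivity \eqref{eq:local-arch-positive}. Given $\theta\in\cM$, choose $1\le q\le Q$ and $(a,q)=1$ with $\theta=a/q+\beta$ and $|\beta|\le Q/(qH^4)$. Dividing \eqref{eq:major-asymptotic} by $\cB_H(0)$ gives
\[
  \cF_H(\theta)=C_q(a)W(\beta H^4)+O_{U,J}\!\left(\bigl(q(1+|\beta|H^4)H^{-1}\bigr)^J\right).
\]
Taking real parts, Corollary \ref{cor:local-sector} and $|\arg W(t)|\le\pi/4$ show that $C_q(a)W(\beta H^4)$ lies in the closed right half-plane. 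Hence
\[
  \Re\cF_H(\theta)\ge -O_{U,J}\!\left(\bigl(q(1+|\beta|H^4)H^{-1}\bigr)^J\right).
\]

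It then remains to make the error term at most $H^{-1/2}$. By the definition of the major arcs, $q(1+|\beta|H^4)\le q+Q\le 2Q=2H^{1-4\sigma}$, exactly as in the proof of Lemma \ref{lem:lattice-summation}. Therefore the error is
\[
  \ll_{U,J}\bigl(2H^{-4\sigma}\bigr)^J\ll_J H^{-4\sigma J}.
\]
Choose $J=J(\sigma)$ to be an integer with $4\sigma J\ge 1/2$, say $J=\lceil 1/(8\sigma)\rceil$. The error is then $O_{U,\sigma}(H^{-1/2})$, which yields \eqref{eq:major-lower} with a constant depending only on $U$ and $\sigma$. The dependence is uniform in $\theta$ because the implied constant in Lemma \ref{lem:lattice-summation} depends only on $U$ and $J$.

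There is no serious obstacle here. The one point to check is that the case $q=1$ (where $C_1=1$) and the endpoints of the arcs fall under the same bound, and they do since $q\le Q$ throughout. The other point is that the sector condition really gives $\Re\ge0$: the strict bound $|\arg C_q(a)|<\pi/4$ combined with $|\arg W|\le\pi/4$ gives a total argument strictly less than $\pi/2$ in absolute value. This is exactly \eqref{eq:local-arch-positive}, so the real part of the main term can simply be dropped.
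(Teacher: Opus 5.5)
Your proposal is correct and follows the paper's proof: divide the major-arc asymptotic by $\cB_H(0)$, discard the main term using \eqref{eq:local-arch-positive}, and bound the error by $(QH^{-1})^J=H^{-4\sigma J}$. Your choice $J=\lceil 1/(8\sigma)\rceil$ is slightly cleaner than the paper's $J=1/(8\sigma)$, because it guarantees $J\ge1$ as Lemma \ref{lem:lattice-summation} requires; the paper's choice falls below $1$ when $\sigma>1/8$.
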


\begin{proof}
Let $J=1/(8\sigma)$. Dividing the estimate in Lemma \ref{lem:lattice-summation} by \(\cB_H(0)\),
the normalized error is
\[
  \ll_{U,J} \bigl(q(1+|\beta|H^4)H^{-1}\bigr)^J.
\]
On \(\cM\) we have \(q\le Q\) and \(|\beta|H^4\le Q/q\), hence
\[
  q(1+|\beta|H^4)\ll q+Q\ll Q.
\]
Thus the normalized error is
\[
  \ll_{U,J}(QH^{-1})^J=H^{-4J\sigma}=H^{-1/2},
\]
by our choice of \(J\). The real part of
the normalized main term is nonnegative by \eqref{eq:local-arch-positive}.
\end{proof}

\section{Minor arcs}
\label{sec:minor}
We record the following immediate consequence of Dirichlet's approximation principle. 
\begin{lemma}
\label{lem:minor-dirichlet}
If $\theta\notin\cM$, then there are coprime integers $a,q$ such that
\begin{equation}
  Q<q\le \frac{H^4}{Q},
  \qquad
  \left|\theta-\frac aq\right|\le \frac{Q}{qH^4}\le \frac1{q^2}.
  \label{eq:minor-dirichlet}
\end{equation}
\end{lemma}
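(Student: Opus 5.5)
We will apply Dirichlet's approximation theorem with parameter $P=H^4/Q$. This gives, for any $\theta\in\T$, a reduced fraction $a/q$ with
\[
  1\le q\le \frac{H^4}{Q},\qquad \left|\theta-\frac aq\right|\le \frac{1}{qP}=\frac{Q}{qH^4}.
\]
We may take $(a,q)=1$ after reducing the fraction, which only decreases $q$ and so only strengthens the bound $1/(qP)$. The plan is then to rule out $q\le Q$ using the hypothesis $\theta\notin\cM$, and to deduce the remaining inequality $Q/(qH^4)\le 1/q^2$ from the upper bound on $q$.

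\textbf{Step 1.} Suppose $q\le Q$. Then $\theta$ satisfies $\|\theta-a/q\|\le |\theta-a/q|\le Q/(qH^4)$ with $(a,q)=1$ and $1\le q\le Q$. Reducing $a$ modulo $q$ does not change $\|\theta-a/q\|$, so $\theta$ lies in the corresponding arc in the definition \eqref{eq:maj-arcs} of $\cM$. This contradicts $\theta\notin\cM$, so $q>Q$.

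\textbf{Step 2.} Since $q\le H^4/Q$, we have $Q/H^4\le 1/q$, and therefore
\[
  \frac{Q}{qH^4}\le \frac{1}{q^2}.
\]
This yields \eqref{eq:minor-dirichlet}.

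The only point needing care is the form of Dirichlet's theorem for a real, non-integer parameter $P$. The pigeonhole version gives $q\le P$ with $|q\theta-a|<1/P$, or $\le 1/P$ if one uses the continued-fraction version, and either suffices. One should also check that the range $Q<q\le H^4/Q$ is nonempty. Since $Q=H^{1-4\sigma}\le H^2$, we have $Q^2\le H^4$, so $Q\le H^4/Q$; in any case nonemptiness follows from the argument itself. No genuine obstacle is expected.
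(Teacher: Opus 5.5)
Your proof is correct and is exactly the argument the paper has in mind: the paper gives no separate proof and calls the lemma an immediate consequence of Dirichlet's approximation theorem. Your version applies Dirichlet with parameter $H^4/Q$, rules out $q\le Q$ using the definition of $\cM$, and reads off $Q/(qH^4)\le 1/q^2$ from $q\le H^4/Q$. (One small wording slip: reducing $a/q$ to lowest terms makes $1/(qP)$ larger, so the required inequality becomes weaker and therefore still holds, rather than being ``strengthened.'')
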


We now prove a Weyl-type estimate. 
\begin{lemma}
\label{lem:averaged-sums}
Let $H\ge2$ and $1\le M\le 80H^2\log H$.  Suppose that
$\theta$ and $q$ satisfy \eqref{eq:minor-dirichlet}.  Then for every $\eps>0$,
\begin{equation}
  \sum_{1\le |n|\le M}
  \left|
  \sum_{x,y\in\Z}
  U\!\left(\frac{x}{H}\right)U\!\left(\frac{y}{H}\right)
  e\!\left(n\theta(x^2+y^2)\right)
  \right|
  \ll_{U,\eps}
  H^\eps
  \left(
    MH+H^2+q+\frac{H^2M}{q}
  \right).
  \label{eq:averaged-sums}
\end{equation}
\end{lemma}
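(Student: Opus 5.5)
The inner sum factors: writing
\[
S(\alpha)=\sum_{x\in\Z}U\!\left(\frac xH\right)e(\alpha x^2),
\]
the inner double sum is exactly $S(n\theta)^2$. So the left side of \eqref{eq:averaged-sums} equals $\sum_{1\le|n|\le M}|S(n\theta)|^2$. The plan is to bound this by Weyl differencing in the single variable $x$ and then to average over $n$ using the rational approximation $|\theta-a/q|\le q^{-2}$ from \eqref{eq:minor-dirichlet}.

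First, I expand $|S(\alpha)|^2$ directly, putting $x=y+h$:
\[
|S(\alpha)|^2=\sum_{|h|\le H}\sum_{y}U\!\left(\frac{y+h}{H}\right)U\!\left(\frac yH\right)e\bigl(\alpha(2hy+h^2)\bigr).
\]
The inner sum over $y$ runs over an interval of length $\ll H$ with a smooth weight, so summation by parts (or simply the bounded variation of $U$) gives the bound $\ll_U \min(H,\|2h\alpha\|^{-1})$. This yields
\[
|S(\alpha)|^2\ll_U H+\sum_{1\le|h|\le H}\min\bigl(H,\|2h\alpha\|^{-1}\bigr).
\]
The $h=0$ term contributes $H$ for each $n$, so $MH$ in total. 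No Cauchy--Schwarz step is needed, since we are already squared.

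Next, I take $\alpha=n\theta$ and sum over $n$. Writing $m=2hn$, we have $|m|\le 4HM$, and each $m$ arises from at most $\tau(|m|)\ll H^\eps$ pairs $(h,n)$, because $M\le 80H^2\log H$. The sum is therefore
\[
\ll H^\eps\sum_{1\le m\le 4HM}\min\bigl(H,\|m\theta\|^{-1}\bigr).
\]
The classical lemma (Vaughan, Lemma 2.2) states that for $|\theta-a/q|\le q^{-2}$,
\[
\sum_{m\le X}\min\bigl(Y,\|m\theta\|^{-1}\bigr)\ll\left(\frac Xq+1\right)(Y+q\log q).
\]
With $X=4HM$ and $Y=H$ this gives
\[
\ll H^\eps\left(\frac{H^2M}{q}+HM\log q+H+q\log q\right).
\]
Since $q\le H^4$, the logarithms are absorbed into $H^\eps$, and this is within the claimed bound $MH+H^2+q+H^2M/q$.

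The only step requiring care is the divisor-counting reduction from pairs $(h,n)$ to $m$. One must check that $\tau(m)\ll_\eps (HM)^{\eps}\ll H^{3\eps}$, which is where the hypothesis $M\le 80H^2\log H$ is used. The smooth partial-summation bound for the $y$-sum is the other ingredient, and it is routine. The $H^2$ term on the right of \eqref{eq:averaged-sums} is not even needed, but including it does no harm.
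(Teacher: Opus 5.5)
Your argument is correct, but it reaches \eqref{eq:averaged-sums} by a different route from the paper.

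The paper never Weyl-differences. It expands $|S(n\theta)|^2$ as a double sum over $x,y$ and puts the $n$-sum innermost, where it is a geometric series. This gives $\sum_{1\le|m|\ll H^2}\min\{M,\|m\theta\|^{-1}\}$ with $m=x^2-y^2=(x-y)(x+y)$, and the number of pairs $(x,y)$ per $m$ is controlled by the divisor bound. The spacing count \eqref{eq:spacing-count}, proved inline, is the same Vinogradov-type $\min$-sum estimate you quote; it yields $H^2+(H^2+q)\log M+H^2M/q+M$. There the hypothesis $M\le 80H^2\log H$ serves only to absorb $M$ and $\log M$ into $H^{\eps}(H^2+q)$.

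You instead difference in $x$ and take the cancellation from the linear $y$-sum, which requires the summation-by-parts step for the smooth weight. You then merge the $n$-average with the shift via $m=2hn$. Your frequencies therefore run up to $\asymp HM$, and the upper bound on $M$ is what keeps $\tau(m)\ll H^{\eps}$.

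What your route buys is a bound uniform in $M$, namely $H^{\eps}(MH+H+q+H^2M/q)$. It recovers Weyl's inequality for $k=2$ at $M=1$ and, as you note, does not need the $H^2$ term. The paper's route extracts no cancellation from the $x,y$ variables, so it pays $H^2$ and is trivial for small $M$. That loss is harmless because in Proposition \ref{prop:minor} one has $M\asymp H^2\log H$, so $MH$ dominates $H^2$. In exchange, the paper's route is slightly more elementary, since it needs no partial summation.

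Two small points, neither a gap. Counting signs, the number of pairs $(h,n)$ with $2hn=m$ is at most $2\tau(|m|)$ rather than $\tau(|m|)$. And whichever published form of the $\min$-sum lemma you cite, any extra factor such as $\log(2Xq)$ is absorbed into $H^{\eps}$, since $X\ll H^3\log H$ and $q\le H^4$.
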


\begin{proof}
For any $\alpha\in\R$, put
\[
  S(\alpha)=\sum_{x\in\Z}U\!\left(\frac{x}{H}\right)e(\alpha x^2).
\]
Thus the left-hand side of \eqref{eq:averaged-sums} is simply
\[
  \sum_{1\le |n|\le M}|S(n\theta)|^2.
\]
Since \(U\) is real-valued, we have \(S(-\alpha)=\overline{S(\alpha)}\), and hence
\[
  \sum_{1\le |n|\le M}|S(n\theta)|^2
  =
  2\sum_{1\le n\le M}|S(n\theta)|^2.
\]
It is therefore enough to estimate the sum over positive \(n\).

Expanding the square gives
\[
\begin{aligned}
  \sum_{1\le n\le M}|S(n\theta)|^2
  &=
  \sum_{1\le n\le M}
  \sum_{x,y\in\Z}
  U\!\left(\frac{x}{H}\right)
  U\!\left(\frac{y}{H}\right)
  e\!\left(n\theta(x^2-y^2)\right)  \\
  &=
  \sum_{x,y\in\Z}
  U\!\left(\frac{x}{H}\right)
  U\!\left(\frac{y}{H}\right)
  \sum_{1\le n\le M}e\!\left(n\theta(x^2-y^2)\right).
\end{aligned}
\]
Here the sums are finite because \(U\) is compactly supported.  Since
\(\supp U\subseteq [1,2]\), the variables \(x\) and \(y\) which occur are both
positive and satisfy \(x,y\asymp H\).  Therefore \(x^2=y^2\) is the same as
\(x=y\).  The diagonal contribution is
\[
  M\sum_{x\in\Z}U\!\left(\frac{x}{H}\right)^2
  \ll_U MH.
\]

We now consider the off-diagonal terms.  Put $m=x^2-y^2\ne0$.
Since \(x,y\asymp H\), we have \(1\le |m|\ll H^2\).
For any fixed \(m\ne0\), the number of pairs \((x,y)\) which can contribute
is \(O_\eps(H^{\eps/2})\).  Indeed,
\[
  m=x^2-y^2=(x-y)(x+y),
\]
and once the two factors \(x-y\) and \(x+y\) are chosen, the values of \(x\)
and \(y\) are determined.  Thus the number of such pairs is bounded by a
divisor function of \(|m|\), and the divisor bound gives
\[
  \tau(|m|)\ll_\eps |m|^{\eps/4}\ll H^{\eps/2},
\]
after adjusting the implicit constant.

For each fixed \(m\), the inner sum over \(n\) is a finite geometric progression, so
\[
  \left|\sum_{1\le n\le M}e(nm\theta)\right|
  \ll
  \min\left\{M,\|m\theta\|^{-1}\right\},
\]
with the usual convention that the right-hand side is \(M\) when
\(\|m\theta\|=0\).  Hence
\begin{equation}
  \sum_{1\le n\le M}|S(n\theta)|^2
  \ll_{U,\eps}
  MH
  +
  H^{\eps/2}
  \sum_{1\le |m|\ll H^2}
  \min\left\{M,\|m\theta\|^{-1}\right\}.
  \label{eq:l2-reduction}
\end{equation}
From now on we may assume \(M\ge3\), since if \(M<3\), then the right-hand side above is $\ll MH+H^{2+\eps/2}$, which is acceptable.

We next estimate the remaining sum over \(m\). According to \eqref{eq:minor-dirichlet}, we may write
\[
  \theta=\frac aq+\rho,
  \qquad
  |\rho|\le q^{-2},
  \qquad
  (a,q)=1.
\]
We claim that for \(X\ge1\) and \(0<\delta\le1/2\),
\begin{equation}
  \#\{1\le |m|\le X:\|m\theta\|\le\delta\}
  \ll\delta X+\frac{X}{q}+q\delta+1.
  \label{eq:spacing-count}
\end{equation}
To see this, it suffices to count positive \(m\).  In any interval \(I\) of \(q\)
consecutive integers, the values \(ma/q\) run through a translate of the
\(q\) equally spaced points \(j/q\).  If \(I=\{L+1,\dots,L+q\}\), then
\[
  m\rho=L\rho+O(q|\rho|)=L\rho+O(1/q)
  \qquad(m\in I).
\]
Therefore the condition \(\|m\theta\|\le\delta\) implies
\[
  \left\|\frac{ma}{q}+L\rho\right\|\le \delta+\frac1q.
\]
So the points $ma/q~(m\in I)$ fall inside an arc of $\T$ of length at most $\delta+1/q$. The number of such $m\in I$ does not exceed $q(\delta+1/q)+1=q\delta+1$. Since $[1,X]$ is the disjoint union of at most $X/q+1$ such intervals, we have
\[\#\{1\le m\le X:\|m\theta\|\le\delta\}\le(q\delta+1)(X/q+1)=\delta X+\frac{X}{q}+q\delta+1,\]
which immediately implies \eqref{eq:spacing-count}.

Now we apply \eqref{eq:spacing-count} with \(X\ll H^2\). We use the elementary
dyadic decomposition
\[
  \min\left\{M,\|m\theta\|^{-1}\right\}
  \ll
  1+
  \sum_{\substack{2\le R\le M\\ R\ {\rm dyadic}}}
  R\,1_{\{\|m\theta\|\le R^{-1}\}}.
\]
Indeed, if $\|m\theta\|\le1/M$, then both sides are $\asymp M$. If $\|m\theta\|>1/M$, then $1/(R_0+1)<\|m\theta\|\le 1/R_0$ for some dyadic endpoint $2\le R_0<M$, so that both sides are $\asymp R_0$. Summing this inequality over \(1\le |m|\ll H^2\)
gives
\[
\begin{aligned}
  \sum_{1\le |m|\ll H^2}
  \min\left\{M,\|m\theta\|^{-1}\right\}
  &\ll
  H^2
  +
  \sum_{\substack{2\le R\le M\\ R\ {\rm dyadic}}}
  R~
  \#\{1\le |m|\ll H^2:\|m\theta\|\le R^{-1}\}.
\end{aligned}
\]
Using \eqref{eq:spacing-count} with \(\delta=R^{-1}\), the summand is
\[
\begin{aligned}
  R~
  \#\{1\le |m|\ll H^2:\|m\theta\|\le R^{-1}\}
  &\ll
  R\left(\frac{H^2}{R}+\frac{H^2}{q}+\frac qR+1\right)  \\
  &=
  H^2+\frac{H^2R}{q}+q+R.
\end{aligned}
\]
Therefore,
\[
\begin{aligned}
  \sum_{1\le |m|\ll H^2}
  \min\left\{M,\|m\theta\|^{-1}\right\}
  &\ll
  H^2
  +
  (H^2+q)\log M
  +
  \frac{H^2M}{q}
  +
  M.
\end{aligned}
\]
Since \(1\le M\le 80H^2\log H\) by our assumption, all but the third term on the right-hand side 
are dominated by \(H^{\eps/2}(H^2+q)\).  Hence
\[
  \sum_{1\le |m|\ll H^2}
  \min\left\{M,\|m\theta\|^{-1}\right\}
  \ll_{\eps}
  H^{\eps/2}
  \left(
    H^2+q+\frac{H^2M}{q}
  \right).
\]
Inserting this into \eqref{eq:l2-reduction} gives
\[
  \sum_{1\le n\le M}|S(n\theta)|^2
  \ll_{U,\eps}
  MH
  +
  H^\eps
  \left(
    H^2+q+\frac{H^2M}{q}\right)\le H^\eps
  \left(MH+
    H^2+q+\frac{H^2M}{q}
  \right),
\]
which is exactly \eqref{eq:averaged-sums}.
\end{proof}

\begin{proposition}[Minor-arc estimate]
\label{prop:minor}
For every \(\eps>0\) and \(0<\sigma<1/4\), we have 
\begin{equation}
  |\cF_H(\theta)|
  \ll_{U,\eps,\sigma}
  H^{-1/2+2\sigma+\eps}
  \label{eq:minor-final}
\end{equation}
uniformly for \(\theta\notin\cM\).
\end{proposition}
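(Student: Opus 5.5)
The plan is to reduce $\cB_H(\theta)$ to an average over $v$ of two-dimensional sums, apply Cauchy--Schwarz in $v$, and then use Lemma \ref{lem:averaged-sums} with $n=v^2$.

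\textbf{Truncation in $v$.} Write
\[
  \cB_H(\theta)=\frac12\sum_{v\in\Z}e^{-v^2/H^2}S(v^2\theta)^2,
  \qquad
  S(\alpha)=\sum_{x}U(x/H)e(\alpha x^2).
\]
We have $|S|\ll H$, and the $v=0$ term is $O(H^2)$, which is acceptable after dividing by $\cB_H(0)\asymp H^3$. Terms with $|v|>V:=H\sqrt{10\log H}$ contribute $O(H^{-5})$ in total. So it suffices to bound
\[
  \sum_{1\le|v|\le V}|S(v^2\theta)|^2 .
\]
The weight $e^{-v^2/H^2}\le1$ may be dropped because every term is now nonnegative. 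This is convenient, since it means no partial summation is needed.

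\textbf{Main bound.} Since $v^2$ takes each value at most twice, we get
\[
  \sum_{1\le|v|\le V}|S(v^2\theta)|^2\le 2\sum_{1\le n\le V^2}|S(n\theta)|^2 .
\]
Applying Lemma \ref{lem:averaged-sums} with $M=V^2\ll H^2\log H$, which is allowed since $V^2=10H^2\log H\le 80H^2\log H$, gives
\[
  \ll H^\eps\bigl(H^3+H^2+q+H^4/q\bigr).
\]
Dividing by $H^3$ leaves an $O(H^\eps)$ term, which is useless. So this crude route loses: summing over all $n$ up to $V^2$ discards the sparsity of the squares.

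\textbf{Repair via Cauchy--Schwarz.} Instead apply Cauchy--Schwarz in $v$ to the sum of $|S(v^2\theta)|^2$, using $|S|^2\le H\,|S|$ pointwise:
\[
  \sum_{1\le|v|\le V}|S(v^2\theta)|^2
  \le H\sum_{1\le|v|\le V}|S(v^2\theta)|
  \le H\,V^{1/2}\Bigl(\sum_{1\le|v|\le V}|S(v^2\theta)|^2\Bigr)^{1/2}.
\]
This is circular, so a better route is needed. The approach is to bound the fourth moment $\sum_v|S(v^2\theta)|^4$. Alternatively, sum over $n\le M$ with $M\asymp V$, and use the fact that Lemma \ref{lem:averaged-sums} is applied to $\sum_{n}|S(n\theta)|^2$ with $n$ ranging over the squares $v^2$. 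The natural form is to bound
\[
  \sum_{v}|S(v^2\theta)|^2
  =\sum_{x,y}U U\sum_{v}e\bigl(v^2\theta(x^2-y^2)\bigr).
\]
The diagonal $x=y$ gives $VH\asymp H^2$, which is fine. For the off-diagonal part, the inner sum is a Weyl sum in $v$ of length $V$ with frequency $m\theta$, where $m=x^2-y^2$. Apply Cauchy--Schwarz over $m$ (with divisor-bound multiplicity $H^\eps$) and Weyl differencing in $v$:
\[
  \Bigl|\sum_v e(v^2m\theta)\Bigr|^2\ll V+\sum_{1\le|h|\le V}\min\bigl(V,\|2hm\theta\|^{-1}\bigr).
\]
This leads to counting pairs $(h,m)$, with $hm=k\ll VH^2$ and $\tau(k)\ll H^\eps$, of the quantity $\min(V,\|2k\theta\|^{-1})$. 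Using the spacing bound \eqref{eq:spacing-count} with $X\ll H^3$, the sum over $m$ of the square of the inner Weyl sum becomes
\[
  \ll H^\eps\bigl(H^2V+H^3+q+H^3V/q\bigr).
\]
Cauchy--Schwarz over the $\ll H^2$ values of $m$ then gives an off-diagonal total
\[
  \ll H^{1+\eps}\bigl(H^3+q+H^4/q\bigr)^{1/2}.
\]
With $Q<q\le H^4/Q$ and $Q=H^{1-4\sigma}$, this is $\ll H^{\eps}\,H^{1+3/2+2\sigma}$. Dividing by $H^3$ gives $H^{-1/2+2\sigma+\eps}$, as claimed.

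\textbf{Main obstacle.} The main obstacle is precisely this bookkeeping, namely obtaining the $H^4/q$ term rather than something worse. That requires using the divisor bound for $k=2hm$ and tracking the ranges carefully.
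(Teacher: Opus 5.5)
Your final argument is correct and gives the stated bound, but it takes a different route from the paper. It is buried under two false starts: the ``main bound'' via Lemma \ref{lem:averaged-sums} and the circular Cauchy--Schwarz step. Both should be deleted from a write-up.

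What actually works is this. You apply the triangle inequality in $v$, expand $|S(v^2\theta)|^2$ in $(x,y)$, group by $m=x^2-y^2$, apply Cauchy--Schwarz over $m$, and Weyl-difference the quadratic sum $\sum_v e(v^2m\theta)$. Everything then reduces to $\sum_{1\le|k|\ll VH^2}\tau(|k|)\min(V,\|k\theta\|^{-1})\ll H^{\eps}(H^3+q+H^4/q)$ via \eqref{eq:spacing-count}. Two harmless corrections: the shift $h$ should range over $|h|\le 2V$, and $VH^2\asymp H^3\sqrt{\log H}$ rather than $H^3$.

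The paper instead applies Cauchy--Schwarz in $(x,y)$ first. This replaces the sparse frequencies $v^2$ by the differences $n=v_1^2-v_2^2$. There are $\asymp V^2$ pairs and $\asymp V^2$ possible values of $n$, so extending to all $1\le|n|\le V^2$ costs only the divisor bound. Lemma \ref{lem:averaged-sums} then applies as a black box, with the final spacing count over $|m|\ll H^2$ at cutoff $M\asymp H^2$, rather than over $|k|\ll H^3$ at cutoff $V\asymp H$. This is exactly what your crude route lacked: there the $\asymp V$ squares are sparse in $[1,V^2]$, so passing to all $n$ loses a factor $\asymp H$.

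Both routes give $|\cB_H(\theta)|^2\ll H^{2+\eps}(H^3+q+H^4/q)$, and hence the same exponent $-1/2+2\sigma$ after using $Q<q\le H^4/Q$. The paper's ordering is more modular and explains why Lemma \ref{lem:averaged-sums} is phrased as an average over a full interval of $n$. Yours bypasses that lemma and handles the sparsity of the squares directly by standard Weyl differencing, at the price of a second divisor-bound step for $k=2hm$.
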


\begin{proof}
We may suppose that $H$ is sufficiently large. Since \(\theta\notin\cM\), Lemma \ref{lem:minor-dirichlet} gives coprime
integers \(a,q\) such that
\[
  Q<q\le \frac{H^4}{Q},
  \qquad
  \left|\theta-\frac aq\right|\le \frac{Q}{qH^4}\le \frac1{q^2}.
\]
Put $V=H(40\log H)^{1/2}$ and 
let
\[
  \mathcal S(\theta)
  =
  \frac12
  \sum_{x,y\in\Z}
  U\!\left(\frac{x}{H}\right)U\!\left(\frac{y}{H}\right)
  \sum_{|v|\le V}
  \exp\!\left(-\frac{v^2}{H^2}\right)
  e\!\left(\theta v^2(x^2+y^2)\right).
\]
The discarded range \(|v|>V\) is negligible.  Indeed,
\begin{equation}\label{eq:B_H-S}
|\cB_H(\theta)-\mathcal S(\theta)|\ll_U
  H^2\sum_{|v|>V}e^{-v^2/H^2}<2H^2\int_{V/2}^{\infty}e^{-t^2/H^2}\,dt\ll H^3e^{-V^2/(8H^2)}\ll H^{-2}.   
\end{equation}
Since \(\cB_H(0)\asymp_U H^3\) by Lemma \ref{lem:normalization}, this error is negligible after normalization.

We now estimate \(\mathcal S(\theta)\).  By Cauchy's inequality in the
\((x,y)\)-variables,
\[|\mathcal S(\theta)|^2\le
  \left(
    \sum_{x,y\in\Z}
    U\!\left(\frac{x}{H}\right)U\!\left(\frac{y}{H}\right)
  \right)                                     
  \sum_{x,y\in\Z}
  U\!\left(\frac{x}{H}\right)U\!\left(\frac{y}{H}\right)
  \left|
    \frac12
    \sum_{|v|\le V}
    \exp\!\left(-\frac{v^2}{H^2}\right)
    e\!\left(\theta v^2(x^2+y^2)\right)
  \right|^2 .
\]
The first factor is \(O_U(H^2)\).  Expanding the square in the second factor
and using
\[
  \exp\!\left(-\frac{v_1^2+v_2^2}{H^2}\right)\le1,
\]
we find
\begin{equation}
  |\mathcal S(\theta)|^2
  \ll_U
  H^2
  \sum_{|v_1|,|v_2|\le V}
  \left|
  \sum_{x,y\in\Z}
  U\!\left(\frac{x}{H}\right)U\!\left(\frac{y}{H}\right)
  e\!\left(\theta(v_1^2-v_2^2)(x^2+y^2)\right)
  \right|.
  \label{eq:minor-full-ttstar}
\end{equation}

We first handle the terms with \(v_1^2=v_2^2\).  There are \(O(V)\) such pairs
\((v_1,v_2)\), and for each of them the inner sum over \(x,y\) is \(O_U(H^2)\).
Hence these terms contribute \(O_U(VH^2)\) to the sum on the right-hand side of
\eqref{eq:minor-full-ttstar}.

For the remaining terms, put $n=v_1^2-v_2^2\ne0$.
Then 
\[
  1\le |n|\le V^2=40H^2\log H.
\]
For each fixed nonzero \(n\), the number of pairs \((v_1,v_2)\) with
\(|v_1|,|v_2|\le V\) and \(v_1^2-v_2^2=n\) is \(O_\eps(H^{\eps/2})\) by the
divisor bound. Grouping the off-diagonal terms according to the value of \(n\), their total
contribution is therefore
\[
\begin{aligned}
  &\ll_\eps
  H^{\eps/2}
  \sum_{1\le |n|\le V^2}
  \left|
  \sum_{x,y\in\Z}
  U\!\left(\frac{x}{H}\right)U\!\left(\frac{y}{H}\right)
  e\!\left(n\theta(x^2+y^2)\right)
  \right|.
\end{aligned}
\]
Let \(M=\lceil V^2\rceil\).  For \(H\) sufficiently large, we have
\(M\le 80H^2\log H\), so Lemma \ref{lem:averaged-sums}, applied with
\(\eps/2\) in place of \(\eps\), gives
\[\sum_{1\le |n|\le V^2}
  \left|
  \sum_{x,y\in\Z}
  U\!\left(\frac{x}{H}\right)U\!\left(\frac{y}{H}\right)
  e\!\left(n\theta(x^2+y^2)\right)
  \right|\ll_{U,\eps}
  H^{\eps/2}
  \left(
    MH+H^2+q+\frac{H^2M}{q}
  \right).
\]
Since \(M\asymp V^2\), the diagonal and off-diagonal contributions together give
\[
\begin{aligned}
  \sum_{|v_1|,|v_2|\le V}
  \left|
  \sum_{x,y\in\Z}
  U\!\left(\frac{x}{H}\right)U\!\left(\frac{y}{H}\right)
  e\!\left(\theta(v_1^2-v_2^2)(x^2+y^2)\right)
  \right|
  &\ll_{U,\eps}
  VH^2
  +
  H^\eps
  \left(
    V^2H+H^2+q+\frac{H^2V^2}{q}
  \right)  \\
  &\ll_{U,\eps}
  H^\eps
  \left(
    H^3+q+\frac{H^4}{q}
  \right),
\end{aligned}
\]
where the powers of \(\log H\) coming from \(V\) have been absorbed into
\(H^\eps\).

Inserting this bound into \eqref{eq:minor-full-ttstar} and using
\(\cB_H(0)\asymp_U H^3\), we obtain
\[\left|\frac{\mathcal S(\theta)}{\cB_H(0)}\right|^2
  \ll_{U,\eps}
  \frac{H^2}{H^6}
  H^\eps
  \left(
    H^3+q+\frac{H^4}{q}
  \right)=
  H^\eps
  \left(
    H^{-1}+\frac{q}{H^4}+\frac1q
  \right).
\]
By the choice of \(q\), we have $q>Q$ and $q\le \frac{H^4}{Q}$. It follows that
\[
  \left|\frac{\mathcal S(\theta)}{\cB_H(0)}\right|^2
  \ll_{U,\eps}
  H^\eps\left(H^{-1}+Q^{-1}\right).
\]
Using \(\cB_H(0)\asymp_U H^3\) and appealing to \eqref{eq:B_H-S}, we get
\[
  |\cF_H(\theta)|^2
  \ll_{U,\eps}
  H^\eps\left(H^{-1}+Q^{-1}\right).
\]
Taking square roots and absorbing the resulting change in \(\eps\), we get
\[
  |\cF_H(\theta)|
  \ll_{U,\eps}
  H^\eps\left(H^{-1/2}+Q^{-1/2}\right).
\]
Thus, since \(Q=H^{1-4\sigma}\), we have
\[
  |\cF_H(\theta)|
  \ll_{U,\eps,\sigma}
  H^{-1/2+2\sigma+\eps}.
\]
This proves \eqref{eq:minor-final}.
\end{proof}

\section{Finishing the argument}
We now combine Proposition \ref{prop:major-lower} and Proposition
\ref{prop:minor} to give a quick proof of Theorem \ref{thm:H-fourier-bound}.
\begin{proof}[Proof of Theorem \ref{thm:H-fourier-bound}]
Fix $0<\eps<1$ and take $0<\sigma<\eps/4$.
We work with \(H\) sufficiently large.
If $\theta\in\cM$, then Proposition
\ref{prop:major-lower} gives
\[
  \Re \cF_H(\theta)\ge -C_{U,\sigma}H^{-1/2}.
\]
If $\theta\notin\cM$, then Proposition \ref{prop:minor}, applied with
\(\eps/2\) in place of \(\eps\), gives
\[
  \Re \cF_H(\theta)
  \ge
  -|\cF_H(\theta)|
  \ge
  -C_{U,\eps,\sigma}H^{-1/2+2\sigma+\eps/2}.
\]
Since \(\sigma<\eps/4\), we have
\[
  -\frac12+2\sigma+\frac\eps2
  <
  -\frac12+\eps.
\]
Absorbing the dependence on \(\sigma\) into the constant, we obtain
\[
  \Re \cF_H(\theta)\ge -C_{U,\eps}H^{-1/2+\eps}
  \qquad(\theta\in\R/\Z),
\]
for some constant $C_{U,\eps}>0$. This proves Theorem \ref{thm:H-fourier-bound}.
\end{proof}

\begin{appendix}
\section{Proof of the Sárközy-type theorem}
We record the standard proof that power-saving van der Corput property implies the Sárközy-type result promised in the introduction. For this we use Fourier analysis on the finite cyclic group \(\Z/M\Z\). For a function $f:\Z/M\Z\to \C$, we use the Fourier transform
\[
  \widehat f(\xi)
  =
  \sum_{x\pmod* M} f(x)e\!\left(\frac{x\xi}{M}\right),
  \qquad \xi\in\Z/M\Z.
\]
Recall that Parseval's identity yields
\[
  \frac1M\sum_{\xi\pmod* M}|\widehat f(\xi)|^2
  =
  \sum_{x\pmod* M}|f(x)|^2.
\]

\begin{proof}[Proof of Corollary \ref{cor:sarkozy-two-squares}]
Let \(M=2N+1\), and regard \(A\) as a subset of \(\Z/M\Z\).  We apply Theorem
\ref{thm:power-saving-vdc} to obtain a trigonometric polynomial
\[
  P(\theta)=\sum_{d\in S_N}c_d e(d\theta),
  \qquad
  \eta=C_\eps N^{-1/8+\eps},
\]
such that \(\Re P(\theta)\ge -\eta\) for every \(\theta\in\R/\Z\), and \(P(0)=1\).
Let
\[
  \widehat{1_A}(\xi)=\sum_{a\in A}e(a\xi/M),
  \qquad \xi\in\Z/M\Z.
\]
Since \(A\subseteq [N]\), \(1\le d\le N\), and \(M=2N+1\), the congruence
\(a'-a\equiv d\pmod M\) is the same as the equality \(a'-a=d\).  Hence the
assumption \((A-A)\cap S_N=\varnothing\) gives
\[
  \frac1M\sum_{\xi\pmod* M}|\widehat{1_A}(\xi)|^2P(\xi/M)=0.
\]
Taking real parts and separating the zero frequency, we get
\[
  0
  =
  \frac{|A|^2}{M}
  +
  \frac1M\sum_{\substack{\xi\pmod* M\\ \xi\ne0}}
  |\widehat{1_A}(\xi)|^2\Re P(\xi/M).
\]
Using \(\Re P(\theta)\ge -\eta\) and Parseval,
\[
  0
  \ge
  \frac{|A|^2}{M}
  -
  \frac{\eta}{M}\sum_{\xi\pmod* M}|\widehat{1_A}(\xi)|^2
  =
  \frac{|A|^2}{M}-\eta |A|.
\]
This implies \(|A|\le \eta M\ll_\eps N^{7/8+\eps}\), as desired.
\end{proof}

\section{The normalization factor}
We now prove Lemma \ref{lem:normalization}. This is a straightforward application of Poisson summation. 
\begin{proof}[Proof of Lemma \ref{lem:normalization}]
Define
\[
  \Phi(T,X_1,X_2)
  =
  U(X_1)U(X_2)\exp\!\left(-T^2\right).
\]
Then
\[
  \cB_H(0)
  =
  \frac12
  \sum_{v,x,y\in\Z}
  \Phi\!\left(\frac vH,\frac xH,\frac yH\right).
\]
The function \(\Phi\) is smooth, compactly supported in the \((X_1,X_2)\)
variables, and rapidly decaying in the \(T\)-variable. Therefore $\Phi\in \Sch(\R^3)$, and 
Poisson summation on \(\Z^3\) gives
\[
  \cB_H(0)
  =
  \frac12 H^3
  \sum_{\ell\in\Z^3}\widehat\Phi(H\ell).
\]
The zero frequency contributes
\[
  \frac12 H^3\widehat\Phi(0)
  =
  \frac12 H^3
  \int_{\R^3}
  U(X_1)U(X_2)
  \exp\!\left(-T^2\right)
  \,dT\,dX_1\,dX_2 .
\]
Since $\int_\R e^{-T^2}\,dT=\sqrt\pi,$
the zero frequency is $C_UH^3,$
where
\[
  C_U=
  \frac{\sqrt\pi}{2}\left(\int_\R U(t)\,dt\right)^2.
\]

It remains to bound the nonzero frequencies.  Since $\Phi\in \Sch(\R^3)$ is Schwartz, for
every \(B>0\),
\[
  |\widehat\Phi(\xi)|\ll_{U,B}(1+|\xi|)^{-B}.
\]
Choosing \(B\) sufficiently large in terms of \(A\), we obtain
\[
\begin{aligned}
  H^3\sum_{\ell\ne0}|\widehat\Phi(H\ell)|\ll_{U,B}
  H^3\sum_{\ell\ne0}(1+H|\ell|)^{-B} \ll_{U,A} H^{-A}.
\end{aligned}
\]
Hence
\[
  \cB_H(0)=C_UH^3+O_{U,A}(H^{-A}).
\]
This proves \eqref{eq:ZH-asymp}.
\end{proof}

\end{appendix}
\section*{Acknowledgments}

The second author was supported by the HUN-REN Alfr\'ed R\'enyi Institute of Mathematics
(Erd\H{o}s Center) during the completion of this manuscript, and thanks the
R\'enyi Institute for its hospitality. The second author would also like to thank Alex Rice for suggesting the problem and 
\'Akos Magyar for useful conversations.

\section*{AI disclosure}
The authors used OpenAI's ChatGPT, specifically GPT-5.5 Pro, during the research underlying this paper, primarily as a sounding board, to compute and analyze examples, to investigate proposed intermediate claims, and to test parameter choices. In particular, we used GPT-5.5 Pro to analyze candidate choices for the weights $c_d$. ChatGPT was used for LaTeX assistance, formatting, and copyediting during the preparation of the manuscript. The mathematical arguments presented here are the authors' own, and the authors take full responsibility for the contents of the paper.

\bibliographystyle{alpha}
\bibliography{references}

\begin{thebibliography}{BEW98}

\bibitem[BEW98]{BEW}
B.~C. Berndt, R.~J. Evans, and K.~S. Williams.
\newblock {\em {Gauss} and {Jacobi} Sums}.
\newblock Canadian Mathematical Society Series of Monographs and Advanced Texts. John Wiley \& Sons, Inc., New York, 1998.

\bibitem[Gre24]{GreenShiftedPrimes}
B.~Green.
\newblock On {S}{\'a}rk{\"o}zy's theorem for shifted primes.
\newblock {\em Journal of the American Mathematical Society}, 37(4):1121--1201, 2024.

\bibitem[GS24]{GreenSawhneyFurstenbergSarkozy}
B.~Green and M.~Sawhney.
\newblock New bounds for the {F}urstenberg--{S}{\'a}rk{\"o}zy theorem, 2024.
\newblock arXiv:2411.17448.

\bibitem[KMF78]{KamaeMendesFrance}
T.~Kamae and M.~Mend{\`e}s~France.
\newblock Van der {C}orput's difference theorem.
\newblock {\em Israel Journal of Mathematics}, 31(3--4):335--342, 1978.

\bibitem[Mon94]{MontgomeryTenLectures}
{H.~L.} Montgomery.
\newblock {\em Ten Lectures on the Interface between Analytic Number Theory and Harmonic Analysis}, volume~84 of {\em CBMS Regional Conference Series in Mathematics}.
\newblock American Mathematical Society, Providence, RI, 1994.

\bibitem[MR14]{MatolcsiRuzsa2012GeneralProperties}
{M.} Matolcsi and {I.~Z.} Ruzsa.
\newblock Difference sets and positive exponential sums {I}. general properties.
\newblock {\em Journal of Fourier Analysis and Applications}, 20(1):17--41, 2014.

\bibitem[MR21]{MatolcsiRuzsa2021CubicResidues}
{M.} Matolcsi and {I.~Z.} Ruzsa.
\newblock Difference sets and positive exponential sums {II}: Cubic residues in cyclic groups.
\newblock {\em Proceedings of the Steklov Institute of Mathematics}, 314(1):138--143, 2021.

\bibitem[NS14]{NincevicSlijepcevic2013}
M.~Nin{\v{c}}evi{\'c} and S.~Slijep{\v{c}}evi{\'c}.
\newblock Positive exponential sums and odd polynomials.
\newblock {\em Rad Hrvatske akademije znanosti i umjetnosti. Matemati{\v{c}}ke znanosti}, 18(519):35--53, 2014.

\bibitem[Ric20]{RiceBinaryQuadraticForms}
A.~Rice.
\newblock Binary quadratic forms in difference sets.
\newblock In M.~B. Nathanson, editor, {\em Combinatorial and Additive Number Theory III}, volume 297 of {\em Springer Proceedings in Mathematics \& Statistics}, pages 175--196. Springer, Cham, 2020.

\bibitem[Ruz84]{RuzsaConnections}
I.~Z. Ruzsa.
\newblock Connections between the uniform distribution of a sequence and its differences.
\newblock In {\em Topics in Classical Number Theory, Vol. I, II}, volume~34 of {\em Colloquia Mathematica Societatis J{\'a}nos Bolyai}, pages 1419--1443. North-Holland, Amsterdam, 1984.

\bibitem[Sli10]{Slijepcevic2010Squares}
S.~Slijep{\v{c}}evi{\'c}.
\newblock On van der {C}orput property of squares.
\newblock {\em Glasnik Matemati{\v{c}}ki. Serija III}, 45(2):357--372, 2010.

\bibitem[You19]{YounisLowerBounds}
K.~Younis.
\newblock Lower bounds in the polynomial {S}zemer{\'e}di theorem, 2019.
\newblock arXiv:1908.06058.

\end{thebibliography}

\end{document}